\documentclass[11pt,a4paper]{article} 
\usepackage[no-comment]{WSKtracking}
\usepackage{ifpdf}

\newcommand{\Title}{Optimal Coupling of Jumpy Brownian Motion on the Circle}
\usepackage[british]{babel} 

\usepackage{calc, xspace} \usepackage{amsmath, amsthm, amssymb,
  amsfonts, euscript} \usepackage{enumerate}
\usepackage{bbm}

\usepackage[longnamesfirst]{natbib} \setcitestyle{round}

\ifpdf \usepackage{xcolor} \input dvipsnam.def \else
\usepackage{color} \fi \usepackage{graphicx}

\definecolor{linkcolor}{named}{Maroon}
\definecolor{citecolor}{named}{PineGreen}
\definecolor{urlcolor}{named}{RoyalPurple}
\definecolor{okcolor}{named}{OliveGreen}
\definecolor{alertcolor}{named}{BrickRed}

\ifpdf \DeclareGraphicsExtensions{.png,.jpg}%
\else \DeclareGraphicsExtensions{.eps,.ps,.png,.jpg}%
\fi

\usepackage[left=2.2cm,right=2.2cm,top=3cm,bottom=3cm]{geometry}
\parskip \medskipamount

\theoremstyle{plain} \newtheorem{thm}{Theorem}
 
\newtheorem{lem}[thm]{Lemma} 
\theoremstyle{definition} \newtheorem{defn}[thm]{Definition}
\theoremstyle{remark} \newtheorem{rem}[thm]{Remark}

\newcommand{\R}{\mathbb{R}}
\newcommand{\Ex}[2][]{\operatorname{\mathbb{E}}_{#1}\left[#2\right]}
\newcommand{\Indicator}[1]{\operatorname{\mathbbm{1}}_{\left[#1\right]}}
\newcommand{\prob}[1]{\operatorname{\mathbb{P}}\left(#1\right)}

\newcommand{\bra}[1]{\left(#1\right)}

 \newcommand{\Z}{\mathbb Z}
\newcommand{\set}[1]{\left\{#1\right\}}

\newcommand{\dd}{\ensuremath{\textrm{d}}}

\newcommand{\distname}[1]{\mbox{\textup{#1}}}

\newcommand{\expdist}{\distname{Exp}}
\newcommand{\berdist}{\distname{Bernoulli}}
\newcommand{\unifdist}{\distname{Uniform}}
\newcommand{\iid}{\ensuremath{\stackrel{\footnotesize{\textup{i.i.d.}}}{\sim}}}

\newcommand{\law}{\mathcal L}
\newcommand{\eqdist}{\ensuremath{\stackrel{\mathcal{D}}{=}}}
\DeclareMathOperator{\cosech}{cosech}
\DeclareMathOperator{\sech}{sech}

\newcommand{\JBM}{JBM($\lambda$)}
\newcommand{\Tc}{\ensuremath{T_\mathrm{couple}}}
\renewcommand{\Tc}{\ensuremath{T}}
\newcommand{\tv}[1]{{\Vert{#1}\Vert}_\mathrm{TV}}
\newcommand{\modtwopi}{\ (\mathrm{mod}\ 2\pi)}
\newcommand{\Tmin}{\ensuremath{T_{\mathrm{min}}}}
\newcommand{\Tm}{\ensuremath{T_{\mathrm{M}}}}
\newcommand{\Ts}{\ensuremath{T_{\mathrm{S}}}}
\newcommand{\sync}{{}_\mathrm{S}}
\newcommand{\mirror}{{}_\mathrm{M}}
\newcommand{\interval}{\ensuremath{(-\pi,\pi]}} 

\usepackage[ \ifpdf pdftex, pdfstartview=FitH, \fi ]{hyperref}
\hypersetup{pdfstartview=FitH} \hypersetup{citecolor=citecolor}
\hypersetup{linkcolor=linkcolor}
\hypersetup{urlcolor=urlcolor} \hypersetup{colorlinks=true}

\usepackage{framed} \usepackage[small]{caption}

{\begin{figure}[htbp]\begin{framed}}%
    {\end{framed}\end{figure}}
 \begin{document}

 \title{\Title}
 \author{%
Stephen B. Connor
\ and Roberta Merli}

 \date{1st May 2023}
 \maketitle

\TrackingPreamble

 \begin{abstract}
 	Consider a Brownian motion on the circumference of the unit circle, which jumps to the opposite point of the circumference at incident times of an independent Poisson process of rate $\lambda$. We examine the problem of coupling two copies of this `jumpy Brownian motion' started from different locations, so as to optimise certain functions of the coupling time. We describe two intuitive co-adapted couplings (`Mirror' and `Synchronous') which differ only when the two processes are directly opposite one another, and show that the question of which strategy is best depends upon the jump rate $\lambda$ in a non-trivial way.
 	
 	More precisely, we use the theory of stochastic control to show that there exists a critical value $\lambda^\star = 0.083\dots$ such that the Mirror coupling minimises the mean coupling time within the class of all co-adapted couplings when $\lambda<\lambda^\star$, but for $\lambda\ge \lambda^\star$ the Synchronous coupling uniquely maximises the Laplace transform $\Ex{e^{-\gamma T}}$ of all coupling times $T$ within this class. We also provide an explicit description of a (non co-adapted) maximal coupling for any jump rate in the case that the two jumpy Brownian motions begin at antipodal points of the circle.
 \end{abstract}	

 \begin{quotation}
  \noindent
  Keywords and phrases:\\
  \textsc{Optimal coupling; co-adapted coupling; mirror and synchronous coupling; stochastic control; HJB equation}
 \end{quotation}

 \begin{quotation}\noindent
  2020 Mathematics Subject Classification:\\
   Primary 93E20\\
   Secondary 60J65; 60G51
   \end{quotation}



\section{Introduction}\label{sec:intro}


Consider a continuous time stochastic process $X=(X_t)_{t\ge 0}$ on \interval\ \xNB{Changed from $[0,2\pi)$: is this ok?} given by
\begin{equation}\label{eq:JBM_defn}
	X_t= X_0 + \frac{1}{2}B_t+\pi N^\lambda_t \modtwopi \,,
\end{equation}
where $B_t$ is a standard $\R$-valued Brownian motion and $N^\lambda$ is an independent Poisson process of rate $\lambda\ge0$. (The factor of a half is introduced purely for algebraic convenience in what follows.)
$X$ is a L\'evy process which can be viewed as a Brownian motion on the circumference of the unit circle which jumps to the antipodal point of the circle at incident times of $N^\lambda$. For this reason, we will refer to the process $X$ as a \textit{jumpy Brownian motion of rate $\lambda$} (\JBM).

In this paper we are interested in couplings of two \JBM\ processes started from different points of the circle. That is, we are interested in processes $(X,\hat X)$ on $\interval^2$ such that, viewed marginally, $X$ and $\hat X$ each behave as a copy of \JBM. Given a coupling $(X,\hat X)$, we define the \textit{coupling time} by
\begin{equation*} \label{eq:coupling_time_defn}
	\Tc = \inf\{t\ge 0\,:\, X_s = \hat X_s \,\text{ for all } s\ge t \} \,.
\end{equation*}
Recall that the tail distribution of any coupling time provides an upper bound on the total variation distance between the laws of $X_t$ and $\hat X_t$ via the coupling inequality \citep{aldousRandomWalksFinite1983}:
\begin{equation} \label{eq:coupling_inequality}
	\tv{\law(X_t) - \law(\hat X_t)} \le \prob{\Tc > t} \,,
\end{equation}
where 
\begin{equation}\label{eq:TV_defn}
	\tv{\law(X_t) - \law(\hat X_t)} = \sup_A \{ \prob{X_t\in A} - \mathbb{P}(\hat X_t\in A) \}
\end{equation}
and where the supremum is taken over all Borel subsets of \interval.
A coupling is called \textit{successful} if $\prob{\Tc > t}\to 0$ as $t\to \infty$; it is called \textit{maximal} if it achieves equality in \eqref{eq:coupling_inequality} for all times $t$. 
 
 It is well known that a maximal coupling of two random processes exists under extremely mild conditions: see  \cite{Griffeath-1975}, \cite{Pitman1976} and \cite{goldsteinMaximalCoupling1979} for discrete-time processes, and \cite{sverchkovMaximalCouplingDvalued1990} for the case of c\`adl\`ag processes with Polish state-space. However, in most cases, explicit construction of a maximal coupling is extremely difficult, and it is natural for attention to focus on classes of couplings which are more readily realisable. One such class is that of \emph{co-adapted} couplings:

\begin{defn} \label{def:co-adapted}
	A coupling $(X,\hat X)$ is called \textit{co-adapted} if there exists a filtration $(\mathcal{F}_t)_{t\ge 0}$ such that $X$ and $\hat X$ are both adapted to $(\mathcal{F}_t)_{t\ge 0}$ and for any $0\le s\le t$,
		\[ \law(X_t\,|\,\mathcal{F}_s) = \law(X_t\,|\,X_s) \quad\text{and} \quad \law(\hat X_t\,|\,\mathcal{F}_s) = \law(\hat X_t\,|\,\hat X_s) \,. \]
\end{defn}
In other words, both $X$ and $\hat X$ are Markov with respect to the filtration $(\mathcal{F}_t)_{t\ge0}$. \cite{kendall2015coupling} refers to co-adapted couplings as \textit{immersed}, since the condition of Definition~\ref{def:co-adapted} is equivalent to demanding that the natural filtrations of $X$ and $\hat X$ are both immersed in a common filtration.

Even though maximal couplings certainly need not be co-adapted, there are a few processes for which maximal co-adapted couplings have been shown to exist. Probably the simplest of these is the reflection/mirror coupling of Euclidean Brownian motions, in which the path of one process, until the coupling time, is obtained by reflecting the other in the hyperplane bisecting the line joining their starting points \citep{LindvallRogers-1986}; indeed, this is the unique maximal co-adapted coupling \citep{Hsu2013a}, a result which holds more generally for Brownian motion on a Riemannian manifold \citep{Kuwada2007}. More recently, \cite{banerjeeRigidityMarkovianMaximal2017} showed that a maximal co-adapted coupling for smooth elliptic diffusions on a complete Riemannian manifold can only possibly exist if the underlying space is a sphere, Euclidean space or hyperbolic space.


Of particular relevance to the work of the current paper is the analysis of a symmetric random walk on the hypercube performed by \cite{Connor2008}, and subsequently generalised by \cite{Connor-2013}. 
They considered the class of co-adapted couplings for two such random walks, and showed that there exists a \emph{stochastically optimal} coupling within this class. In other words, they exhibited a coupling whose coupling time $T^*$ satisfies, simultaneously for all $t\ge 0$,
\[ \prob{T^*>t} = \min\set{ \prob{T>t}\,:\, \text{co-adapted coupling times $T$}}\,. \]
Furthermore, they showed that this optimal co-adapted coupling does not achieve equality in \eqref{eq:coupling_inequality}, thus demonstrating that there does not exist a maximal co-adapted coupling for this random walk. A result in the same vein was proved by \cite{kendall2015coupling}, who showed that there exists a stochastically optimal co-adapted coupling for the two-dimensional process consisting of Brownian motion together with its local time at 0; numerical evidence indicates that this coupling is once again not maximal. We also highlight here the paper of \cite{jackaMirrorSynchronousCouplings2014}, which investigated (amongst other things) whether the reflection coupling for Brownian motions optimises various functions of the coupling time of the corresponding geometric Brownian motions. They showed that the Laplace transform of the coupling time is maximised by the reflection coupling, but that whether this coupling also solves the finite time horizon problem (to minimise the coupling time's tail distribution) depends upon an underlying drift parameter.

By way of motivation for the study of \JBM, consider the following problem. Suppose that $X$ and $\hat X$ start from opposite points on the circle (i.e. $|X_0-\hat X_0| = \pi$, where $|\cdot|$ denotes the shortest distance between two points on the circumference of the unit circle) and that we wish to couple their evolution so as to minimise (some non-decreasing function of) the corresponding coupling time. Write $D_t = |X_t-\hat X_t| \in [0,\pi]$. Consider the two following cases:
\begin{enumerate}
	\item If $\lambda = 0$ then there are no jumps; in this case the fastest coupling is achieved by setting $\hat X_t = \pi - X_t\ (\mathrm{mod}\ 2\pi)$, i.e. having the two processes \textit{reflect} about either of the two points at distance $\pi/2$ from both $X_0$ and $\hat X_0$. The resulting process $D_t$ is a reflected Brownian motion on $[0,\pi]$, and the corresponding  coupling time is given by $\Tc = \inf\{t\,:\, D_t = 0\} = \tau$, where $\tau$ is the first hitting time of the set $\{-\pi/2,\pi/2\}$ by the Brownian motion $\tfrac12 B_t$ making up the diffusion component of $X$.
	
	\item Alternatively, if $\lambda$ is large then we expect jumps to be occurring quickly; in this case it may be better to \textit{synchronise} the driving Brownian motions, and to let the Poisson processes $N$ and $\hat N$ be independent until the first time that we see an incident on either one. In other words, we maintain $D_t=\pi$ for all $t<J$, where $J\sim\expdist(2\lambda)$ is the first time that either $X$ or $\hat X$ jumps, and then the coupling time is given by $\Tc=J$.
\end{enumerate}
Note that the ways in which the driving Brownian motions are coupled in these two scenarios are as different as can be; the first coupling proceeds by maximising the volatility of $B_t-\hat B_t$ for all $t<\Tc$, whilst the second minimises it (setting it to zero, in fact). Taking this intuition further, it is not unreasonable to suppose that there might be a critical value of $\lambda$ with the property that the second of the two couplings sketched above is better than the other if and only if the jump rate exceeds this critical value. 

Furthermore, it is clear that when $\lambda=0$ the mirror coupling described in case 1 above will be \emph{maximal}, but that this will not be true of the coupling strategy outlined in case 2: if one chooses to synchronise the driving Brownian motions and couple at time $J$, then there is a positive probability that coupling would have occurred faster if the Brownian motions had instead been reflected. Indeed, we shall show in Section~\ref{sec:maximal} that when $|X_0-\hat X_0| = \pi$ the total variation distance between the distributions of $X_t$ and $\hat X_t$ satisfies
\[ 
	\tv{\law(X_t\,|\,X_0=0) - \law(\hat X_t\,|\,\hat X_0=\pi)} = \prob{\min\set{J, \tau} > t}\,,
\]
where $J$ and $\tau$ are independent. We shall use this observation to explicitly describe a maximal, but non co-adapted, coupling of two \JBM\ processes started from antipodal points of the circle.

In Section~\ref{sec:co-adapted} we restrict attention to the class of co-adapted couplings of two \JBM\ processes with arbitrary starting points, and consider two couplings motivated by the cases outlined above. We shall refer to these as \emph{Mirror} and \emph{Synchronous} couplings, but these names relate \emph{solely} to the way in which the driving Brownian motions are coupled when $D_t = \pi$. A formal definition will be given later (Definition~\ref{def:formal_RoW}), but in order to state our main results we provide an informal description here.

\newpage

\begin{defn}[Informal description of Mirror and Synchronous couplings]\label{def:couplings-informal}
	The Mirror and Synchronous couplings treat the jump and diffusion components of $X$ and $\hat X$ as follows.
	\begin{description}
		\item[\phantom{J}Jumps:] \emph{  }
		\begin{itemize}
			\item if $D_t \in(0,\pi/2]$, \textit{synchronise} the driving Poisson processes (i.e. make $\hat X$ jump at time $t$ if and only if $X$ sees a jump at time $t$);
			\item if $D_t \in(\pi/2,\pi]$, make the driving Poisson processes \textit{independent}.
		\end{itemize}
		
		\item[\phantom{J}Diffusion:] \emph{  }
		\begin{itemize}
			\item if $D_t \in(0,\pi)$, \textit{reflect} the driving Brownian motions (i.e. set $\hat B_t = - B_t$);
			\item if $D_t=\pi$, then:
			\begin{itemize}
				\item for the \textbf{Mirror coupling}, \textit{reflect} the driving Brownian motions (i.e. continue to set $\hat B_t = - B_t$);
				\item for the \textbf{Synchronous coupling},  \textit{synchronise} the driving Brownian motions (i.e. set $\hat B_t =   B_t$).
			\end{itemize}
		\end{itemize}
	\end{description}
\end{defn}
\medskip
Trivially, the Mirror coupling is successful for all jump rates $\lambda\ge 0$, while the Synchronous coupling is successful for any $\lambda>0$. We note that our Synchronous coupling has a similar two-stage approach to the `reflection/synchronised coupling' of Brownian motion together with local time described in~\cite{kendall2015coupling}. In both cases the Brownian motions are reflected until some stopping time, $T_1$ (in our case, the first hitting time of $D_t$ on the set $\set{0,\pi}$); if coupling has not occurred at time $T_1$ then the Brownian motions are subsequently synchronised until the coupling time, $T$.

In Section~\ref{sec:co-adapted} we shall calculate the Laplace transform of the associated coupling times, $\Tm$ (Mirror) and $\Ts$ (Synchronous), and then use ideas from stochastic control to prove that the Synchronous coupling is faster than the Mirror coupling if and only if $\lambda\ge \lambda^\star$, where $\lambda^\star=0.08337$ is the unique solution to the equation
\begin{equation}\label{eq:def_lambda*}
\cosech(\pi\sqrt{\lambda}) + 2\pi\sqrt{\lambda} = 2\coth(\pi\sqrt{\lambda})\,.
\end{equation}
More precisely, for this range of $\lambda$ the Synchronous coupling turns out to uniquely maximise the Laplace transform of the coupling time within the class of all co-adapted couplings.
\begin{thm}\label{thm:main_thm_LT}
	For any $\lambda\ge \lambda^\star$ the Synchronous coupling uniquely maximises, within the class of co-adapted couplings, the Laplace transform of the corresponding coupling time. That is, for any $\lambda\ge\lambda^\star$ and any $x\in(0,\pi]$, the coupling time $\Ts$ maximises $\Ex{e^{-\gamma T}\,|\, D_0=x}$ simultaneously for all $\gamma\ge 0$. 
\end{thm}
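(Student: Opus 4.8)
The plan is to recast the question as a stochastic control problem for the distance process $D_t=\abs{X_t-\hat X_t}\in[0,\pi]$ and to solve the associated Hamilton--Jacobi--Bellman (HJB) equation by a verification argument. The starting point is the observation (to be extracted from the formal description of co-adapted couplings in Definition~\ref{def:formal_RoW}) that under \emph{any} co-adapted coupling the process $D$, while in the open interval $(0,\pi)$, evolves as a driftless diffusion $\dd D_t=\sigma_t\,\dd W_t$ with an adapted volatility $\sigma_t^2\in[0,1]$ (the endpoint $\sigma_t^2=1$ corresponding to reflecting the driving Brownian motions, and $\sigma_t^2=0$ to synchronising them), is reflected at $\pi$, and in addition jumps $D\mapsto\pi-D$ at an adapted rate $2\mu_t$ with $\mu_t\in[0,\lambda]$ (where $\mu_t=\lambda$ corresponds to independent, and $\mu_t=0$ to synchronised, driving Poisson processes; simultaneous jumps leave $D$ unchanged). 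Writing $\tau_0=\inf\set{t\ge 0\,:\,D_t=0}$ for the first meeting time, one has $T\ge\tau_0$ for every coupling, with equality achievable by driving the two processes identically once they have met; so it suffices to show that for every co-adapted coupling, every $x\in(0,\pi]$ and every $\gamma\ge 0$,
\[
	\Ex{e^{-\gamma\tau_0}\mid D_0=x}\ \le\ v_\gamma(x):=\Ex{e^{-\gamma\Ts}\mid D_0=x},
\]
and that the right-hand side is attained by the Synchronous coupling, for which $\Ts=\tau_0$ directly from its definition.

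Section~\ref{sec:co-adapted} furnishes $v_\gamma$ explicitly as the solution --- $C^1$ across $\pi/2$ --- of the linear system $\tfrac12 v_\gamma''-\gamma v_\gamma=0$ on $(0,\tfrac\pi2)$ and $\tfrac12 v_\gamma''+2\lambda\bra{v_\gamma(\pi-x)-v_\gamma(x)}-\gamma v_\gamma=0$ on $(\tfrac\pi2,\pi)$, subject to $v_\gamma(0)=1$ and $v_\gamma(\pi)=2\lambda/(2\lambda+\gamma)$; this last boundary value is simply the Laplace transform of an $\expdist(2\lambda)$ waiting time, reflecting the fact that under the Synchronous coupling two antipodal processes remain antipodal until the first instant either one jumps, at which point they couple. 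The core of the proof is to verify that $v_\gamma$ is a supersolution of the HJB equation: that for all $x\in(0,\pi)$ and all $(\sigma^2,\mu)\in[0,1]\times[0,\lambda]$,
\[
	\frac{\sigma^2}{2}v_\gamma''(x)+2\mu\bra{v_\gamma(\pi-x)-v_\gamma(x)}-\gamma v_\gamma(x)\ \le\ 0,
\]
with equality for the control used by the Synchronous coupling, together with the boundary inequalities $v_\gamma'(\pi^-)\ge 0$ and $2\lambda\bra{1-v_\gamma(\pi)}\le\gamma v_\gamma(\pi)$. Granting this, Itô's formula applied to $e^{-\gamma t}v_\gamma(D_t)$ --- whose finite-variation part is the displayed HJB expression plus a term $-v_\gamma'(\pi^-)\,\dd L_t^\pi$ coming from the reflection local time at $\pi$ --- exhibits $e^{-\gamma t}v_\gamma(D_t)$ as a bounded supermartingale under every co-adapted coupling, and optional stopping at $\tau_0$ gives $\Ex{e^{-\gamma\tau_0}\mid D_0=x}\le v_\gamma(x)$. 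Uniqueness follows by chasing the equality cases: any other coupling attaining $v_\gamma(x)$ must make $e^{-\gamma t}v_\gamma(D_t)$ a genuine martingale up to $\tau_0$, which --- using strict convexity of $v_\gamma$ on $(0,\pi)$, strict positivity of $v_\gamma'(\pi^-)$ when $\lambda>\lambda^\star$, and the strict sign of $v_\gamma(\pi-x)-v_\gamma(x)$ away from $x=\tfrac\pi2$ --- forces its control to coincide with the Synchronous one and forces $T=\tau_0$.

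The reduction to the control problem, the explicit solution of the linear system, and the Itô/optional-stopping bookkeeping are routine; the main obstacle is the verification of the HJB inequalities, which is precisely where the hypothesis $\lambda\ge\lambda^\star$ is used. The interior inequality forces $v_\gamma$ to be convex on $(0,\pi)$, so that the supremum over $\sigma^2$ is attained at $\sigma^2=1$ --- consistently with the Synchronous coupling reflecting the Brownian motions in the interior --- while $v_\gamma'(\pi^-)\ge 0$ says that it does not pay to reflect back into $(0,\pi)$ from $\pi$, i.e.\ that remaining antipodal beats reflecting there. A direct computation with the explicit formula for $v_\gamma$ shows that this last condition holds for \emph{every} $\gamma\ge 0$ exactly when $\lambda$ satisfies \eqref{eq:def_lambda*}; for $\lambda<\lambda^\star$ it fails, and reflecting at $\pi$ (the Mirror coupling) then strictly increases the Laplace transform, so that $v_\gamma$ is no longer a supersolution. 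The remaining sign and monotonicity statements --- notably $v_\gamma(\pi-x)\ge v_\gamma(x)$ for $x\ge\tfrac\pi2$, which pins the optimal jump control to $\mu=\lambda$ on $(\tfrac\pi2,\pi]$ and $\mu=0$ on $(0,\tfrac\pi2)$ --- should drop out of the same analysis of the explicit solution.
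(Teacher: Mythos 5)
Your proposal is correct and follows essentially the same route as the paper: compute the explicit Laplace transform $w\sync$ for the Synchronous coupling, verify it solves the HJB equation in the interior (where convexity of $w\sync$ pins the optimal volatility and the sign of $w\sync(\pi-x)-w\sync(x)$ pins the jump control), and isolate the boundary condition $w\sync'(\pi)\ge 0$ --- equivalently the non-negativity of $h(\lambda,\gamma)$ for all $\gamma$ --- as the precise point where the hypothesis $\lambda\ge\lambda^\star$ enters. The computational steps you defer (convexity of $v_\gamma$, the sign analysis of $v_\gamma'(\pi^-)$) are exactly the ones the paper carries out in Lemma~\ref{lem:convexity} and via the function $h(\lambda,\gamma)$.
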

It does not appear that there is \emph{any} co-adapted coupling which simultaneously maximises the Laplace transform for all values of $\gamma$ when $\lambda<\lambda^\star$ (see Remark~\ref{rem:no_LT_optimal}). However, the Mirror coupling does turn out to minimise the \textit{mean} coupling time for this set of jump rates.
\begin{thm}\label{thm:main_thm_mean}
	For any $\lambda\in[0,\lambda^\star)$, the Mirror coupling minimises the mean coupling time within the class of co-adapted couplings. That is, for any $\lambda\in[0,\lambda^\star)$ and any $x\in(0,\pi]$, the coupling time $\Tm$ minimises $\Ex{T\,|\, D_0=x}$ within the class of co-adapted couplings.
\end{thm}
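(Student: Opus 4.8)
My plan is a verification argument from stochastic control. First I would introduce the value function $V(x) = \inf\Ex{T\mid D_0=x}$, the infimum over all co-adapted couplings, and identify the controlled process it governs. Using the formal description of co-adapted couplings (Definition~\ref{def:formal_RoW}), under any such coupling the distance process $D_t=|X_t-\hat X_t|$ is a controlled jump-diffusion on $[0,\pi]$: away from the endpoints it diffuses with infinitesimal variance $\sigma_t^2=(1-\rho_t)/2\in[0,1]$, where $\rho_t\in[-1,1]$ is the instantaneous correlation of the driving Brownian motions; it jumps $D\mapsto\pi-D$ at rate $2\mu_t$, where $\mu_t=\lambda-a_t\in[0,\lambda]$ and $a_t$ is the rate at which the driving Poisson processes jump simultaneously; the state $0$ is absorbing; and $\pi$ is a reflecting boundary. (Any extra independent Brownian noise allowed by co-adaptedness only lowers $\sigma_t^2$, so it contributes nothing.) The associated Hamilton--Jacobi--Bellman equation is
\begin{equation*}
 \inf_{\sigma^2\in[0,1],\ \mu\in[0,\lambda]}\left\{\frac{\sigma^2}{2}\,V''(x)+2\mu\bigl(V(\pi-x)-V(x)\bigr)+1\right\}=0,\qquad x\in(0,\pi),
\end{equation*}
with $V(0)=0$ and the Neumann condition $V'(\pi)=0$ arising from reflection at $\pi$.

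Next I would compute the candidate solution, namely the mean coupling time $m(x)=\Ex{\Tm\mid D_0=x}$ of the Mirror coupling. By Definition~\ref{def:couplings-informal} the Mirror coupling uses $\sigma^2\equiv1$, with $\mu\equiv0$ on $(0,\pi/2]$ and $\mu\equiv\lambda$ on $(\pi/2,\pi]$, so $m$ solves $\tfrac12 m''+1=0$ on $(0,\pi/2)$ and $\tfrac12 m''+2\lambda\bigl(m(\pi-x)-m(x)\bigr)+1=0$ on $(\pi/2,\pi)$, with $m(0)=0$, $m'(\pi)=0$, and $C^1$-matching at $\pi/2$. On $(0,\pi/2]$ one gets $m(y)=ay-y^2$; substituting the resulting expression for $m(\pi-x)$ into the equation on $(\pi/2,\pi)$ turns it into a linear ODE with quadratic forcing, solved by a combination of $e^{\pm2\sqrt\lambda x}$ plus a quadratic polynomial. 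Imposing the three boundary/matching conditions then pins down all constants (in particular $a=2\pi\cosh(\pi\sqrt\lambda)/(2\cosh(\pi\sqrt\lambda)-1)$) and, as a by-product, shows $m$ is in fact $C^2$ across $\pi/2$.

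The heart of the argument is then to verify that, for $\lambda\in[0,\lambda^\star)$, this $m$ solves the HJB equation; this reduces to showing $m$ is strictly concave on $(0,\pi]$. From the explicit formula, $m''\equiv-2$ on $(0,\pi/2)$, while on $(\pi/2,\pi)$ one finds $m''(x)=c_\lambda\sinh\!\bigl(\sqrt\lambda(2x-\pi)\bigr)-2$ with $c_\lambda=4\pi\sqrt\lambda/(2\cosh(\pi\sqrt\lambda)-1)>0$; hence $m''$ is increasing on $(\pi/2,\pi)$, equals $-2$ at $\pi/2$, and is negative throughout precisely when $m''(\pi)<0$, i.e.\ when $2\pi\sqrt\lambda\sinh(\pi\sqrt\lambda)<2\cosh(\pi\sqrt\lambda)-1$, which on dividing by $\sinh(\pi\sqrt\lambda)$ is $\cosech(\pi\sqrt\lambda)+2\pi\sqrt\lambda<2\coth(\pi\sqrt\lambda)$, equivalent to $\lambda<\lambda^\star$ by \eqref{eq:def_lambda*}. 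Concavity together with $m'(\pi)=0$ forces $m$ to be increasing, so $\operatorname{sgn}\bigl(m(\pi-x)-m(x)\bigr)=\operatorname{sgn}(\pi/2-x)$; consequently the Mirror controls pointwise minimise the HJB bracket ($\sigma^2=1$ is optimal because $m''\le0$, and $\mu=0$ on $(0,\pi/2]$ resp.\ $\mu=\lambda$ on $(\pi/2,\pi]$ is optimal from the sign of $m(\pi-x)-m(x)$), so $m$ solves the HJB equation. A standard verification argument then finishes the proof: for an arbitrary co-adapted coupling, applying It\^o's formula to $m(D_t)$ up to $\tau_0=\inf\{t\ge0:D_t=0\}\le T$ (the reflection term at $\pi$ vanishing since $m'(\pi)=0$, and $m\in C^2$) shows $t\mapsto m(D_{t\wedge\tau_0})+(t\wedge\tau_0)$ is a submartingale, because the HJB inequality gives drift $\ge-1$; optional stopping, using $m(0)=0$ and boundedness of $m$, yields $m(x)\le\Ex{\tau_0}\le\Ex{T\mid D_0=x}$, with equality for the Mirror coupling.

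I expect the main obstacle to be the concavity claim: one must extract from the somewhat unwieldy explicit formula for $m$ that $m''\le0$ on all of $(0,\pi]$ for every $\lambda<\lambda^\star$ (and that this fails once $\lambda\ge\lambda^\star$, with the binding inequality being concavity at the boundary $\pi$), and confirm that the threshold is exactly the root of \eqref{eq:def_lambda*}. A secondary technical point is making the verification step fully rigorous for a completely general co-adapted coupling: justifying the controlled-process representation of $D_t$ in the required generality (including the behaviour at the reflecting boundary $\pi$ and the claim that auxiliary independent noise never helps), and handling couplings for which $\Ex{T}=\infty$ or which are not successful, where the bound is immediate.
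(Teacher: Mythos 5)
Your proposal is correct and follows essentially the same route as the paper: compute the explicit mean coupling time for the Mirror coupling and then verify that it solves the HJB equation $\inf_{\theta,p}\bigl(1+\tfrac{1-\theta}{4}v''+2\lambda(1-p)\{v(\pi-x)-v(x)\}\bigr)=0$, the paper having stated this verification as ``straightforward'' and omitted the details. Your computations check out against Lemma~\ref{lem:expectation_RoW} (your $a$ equals $\pi+C\mirror(\lambda)$, your $c_\lambda$ equals $4\sqrt{\lambda}\,C\mirror(\lambda)$), and you correctly identify the binding condition as concavity at $x=\pi$, which reduces exactly to the defining equation \eqref{eq:def_lambda*} for $\lambda^\star$.
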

In the sequel we shall use the terms `LT-optimal' and `mean-optimal', and shall usually refrain from writing `within the class of co-adapted couplings'. Note that when $\lambda=0$ (no jumps) the Mirror coupling reduces to the reflection coupling for Brownian motion and is therefore \textit{maximal}. Similarly, when $\lambda\to \infty$ the Synchronous coupling makes jumps from $x\mapsto\pi-x$ occur immediately for any $x>\pi/2$, and so in the limit the difference process $D$  behaves like reflected Brownian motion on $[0,\pi/2]$; it is simple to see that this will once again be maximal. Table~\ref{tab:summary} summarises the non-trivial way in which the various optimality properties of our coupling strategies depend upon the jump rate. 

\begin{table}[hbt]
	\centering
	\renewcommand{\arraystretch}{1.2}
	\begin{tabular}{|c|l|}
		\hline
		Jump rate & Coupling properties \\
		\hline
		$\lambda=0$ & {Mirror is maximal} \\
		$0<\lambda<\lambda^\star$ & Mirror is mean-optimal; no LT-optimal coupling exists \\
		$\lambda^\star\le \lambda$ & {Synchronous is LT-optimal (and hence also mean-optimal)} \\
		$\lambda\to \infty$ & $\Ts$ converges to the maximal coupling time\\
		\hline
	\end{tabular}
	\caption{Summary of optimal coupling properties for different values of the jump rate $\lambda$.}
	\label{tab:summary}
\end{table}
\JBM\ therefore makes an interesting addition to the relatively small number of examples in the literature of processes for which precise results about optimal co-adapted couplings have been established. In particular, to the best of the authors' knowledge, our results are the first of their kind for a L\'evy process with both continuous and jump components.

\begin{rem}
	Our original proofs of Theorems~\ref{thm:main_thm_LT} and \ref{thm:main_thm_mean} were based on excursion theory of Brownian motion, using calculations similar to those in \citet[\S VI.56]{rogers2000diffusions}. We are grateful to an anonymous referee who suggested the more direct method used in Section~\ref{sec:co-adapted} below. However, we still find the excursion approach very appealing, and the corresponding calculations can be found in the original arXiv version of this paper~\citep{Connor.Merli-2022}. Further details can also be found in the PhD thesis of the second author~\citep{Merli2021}, alongside supporting evidence for the correctness of our results obtained via direct simulation of the two couplings.
\end{rem}

\section{Maximal coupling}\label{sec:maximal}

In this section we briefly describe a construction of a maximal, but non-co-adapted, coupling in the case that $D_0 = \pi$ (i.e. the two coupled \JBM\ processes begin at opposite points of the circle). Let $(X_0,\hat X_0) = (0,\pi)$. For any set $A\subset \interval$, 
\[ 
\prob{X_t \in A} = \sum_{k=-\infty}^\infty \prob{\tfrac12 B_t \in A +  2k\pi \,, N^\lambda(t) \text{ even}}+ \prob{\tfrac12 B_t \in A +  (2k+1)\pi \,, N^\lambda(t) \text{ odd}}\,.
\]
Since 
\[ 
\mathbb{P}(N^\lambda(t) \text{ even}) = \frac{1+e^{-2\lambda t}}{2} \ge \mathbb{P}(N^\lambda(t) \text{ odd})\,,
\] 
it is clear that the set on which $X_t$ has greater density than $\hat X_t$ is, for all $t\ge 0$, the half-circle $A^\star=(-\pi/2,\pi/2)$ centred at $X_0$. As above, let $\tau$ denote the first hitting time of the set $\{-\pi/2,\pi/2\}$ by the Brownian motion $\tfrac12 B_t$. By the definition of total variation distance \eqref{eq:TV_defn}, we see that 
\begin{align}
	\tv{\law(X_t\,|\, X_0=0) - \law(\hat X_t\,|\, \hat X_0 = \pi)} &= \prob{X_t\in A^\star} - \mathbb P({\hat X_t\in A^\star}) \nonumber \\ 
	&= 2\prob{X_t\in A^\star} - 1 \nonumber \\
&= e^{-2\lambda t} \bra{ 2\prob{\tfrac12 B_t \in A^\star +  2\pi\Z} - 1} \nonumber \\
&= e^{-2\lambda t} \prob{\tau > t}\,, \label{eq:maxCouplingTail}
\end{align}
where this final equality follows by the reflection principle. Thus we see that when $X$ and $\hat X$ begin from antipodal points, the total variation distance between their laws is given by the tail distribution of the random variable $\min\set{J, \tau}$, where $J\sim\expdist(2\lambda)$. 

This observation leads to an explicit construction of a maximal coupling, as follows. Let $N^{2\lambda}$ be a marked Poisson process of rate $2\lambda$, whose incident times are denoted by $J_k$, and with marks $Y_k\iid\berdist(1/2)$. Let $B$ be a standard Brownian motion started at 0, with hitting time $\tau$ defined as above. We first of all use these to define the \JBM\ process $X$: the diffusion component of $X$ is given by $\tfrac12 B$, and $X$ makes a jump of size $\pi$ at incident times $J_k$ if and only if the corresponding mark $Y_k$ equals 1.

We now define $\hat X$, started at $\pi$, in such a way that $\hat X$ and $X$ couple almost surely at time $T^\star=\min\set{J_1,\tau}$:
\begin{itemize}
	\item On the event $\set{\tau< J_1}$, we let the diffusion component of $\hat X$ equal $-\tfrac12 B$, so that $\hat X$ is the reflection of $X$ about the points $\pm\pi/2$. In this case $\hat X$ and $X$ will meet at time $\tau$.
	
	\item On the event $\set{J_1<\tau}$, we let the diffusion component of $\hat X$ equal $\tfrac12 B$ (i.e. $\hat X$ and $X$ move synchronously, remaining at distance $\pi$ from each other) until time $J_1$. We then make $\hat X$ jump by $\pi$ at time $J_1$ if and only if the mark $Y_1$ equals 0. That is, $\hat X$ jumps at time $J_1$ if and only if $X$ does \emph{not} jump at that time, and since their diffusion components were synchronised, $\hat X$ and $X$ will meet exactly when one of them jumps at time $J_1$.
	
	\item From time $\min\set{J_1,\tau}$ onwards, the diffusion and jump components of $\hat X$ equal those of $X$.
\end{itemize}
This is a valid coupling (the process $\hat X$, viewed marginally, really is a \JBM\ process started from $\pi$). Furthermore, it is clearly maximal but not co-adapted: the evolution of $X$ is adapted to the natural filtration generated by $\set{N^{2\lambda},B}$, whereas the evolution of $\hat X$ until the coupling time depends on knowledge of which of the times $J_1$ and $\tau$ occurs first.

\begin{rem}
	For general starting states ($D_0=x<\pi$) such an explicit description of a maximal coupling is significantly more challenging. The main complication is that it is no longer true that the set on which $X_t$ has greater density than $\hat X_t$ is independent of $t$. Rather, with $(X_0,\hat X_0) = (-x/2,x/2)$, the set $A^\star_t$ on which $X_t$ has greater density takes the form 
\[ A^\star_t = (-r_t,0)\cup(r_t,\pi) \,, \]
where $r_t=r_t(x)$ is the unique point in $(0,\pi)$ at which the densities of $X_t$ and $\hat X_t$ agree. Depending upon the value of $\lambda$, $r_t$ tends to either $\pi/2$ or $\pi$ as $t\to\infty$, but not necessarily monotonically. The only exception to this rule is in the limit as $\lambda\to \infty$, when $r_t \to \pi/2$ for all values of $t$ (and for all $x<\pi$); in this case $A_t$ does not depend upon $t$, and an explicit maximal coupling is once again straightforward to describe. (See the comment at the end of Section~\ref{sec:intro}.)
\end{rem}

\section{Co-adapted couplings}\label{sec:co-adapted}

We begin the search for optimal co-adapted couplings by adopting the perspective of stochastic control. We wish to work with a pair of \JBM\ processes which are adapted to a common filtration of $\sigma$-algebras $(\mathcal{F}_t)$; it is more convenient to work initially with jumpy Brownian motions $(X,\hat X)$ on the real line (i.e. processes satisfying equation~\eqref{eq:JBM_defn} without the `mod $2\pi$') and then project to the circle. Let $(\mathcal{F}_t)_{t\ge 0}$ be any filtration to which the following {independent} random processes are all adapted: 
two standard Brownian motions, $B$ and $\tilde B$;
two marked Poisson processes $N^\lambda$ and $\tilde N^\lambda$ of rate $\lambda\ge 0$, whose marks ($U_t$ and $\tilde U_t$, respectively) are independent and identically distributed as $\unifdist[0,1]$.

First, note that any co-adapted coupling of two Brownian motions $B$ and $\hat B$ can be represented by the SDE
\[ \dd \hat B_t = \theta_t \,\dd B_t + \sqrt{1-\theta_t^2}\, \dd \tilde B_t\,, \]
where $\theta_t$ is a predictable random process taking values in $[-1,1]$ \citep[Lemma 6]{kendall2009brownian}. Similarly, as explained in~\citep{Connor2008}, any co-adapted coupling of two Poisson processes $(N^\lambda,\hat N^\lambda)$ can be written as 
\begin{equation}\label{eqn:poisson_couple}
	\hat N^\lambda (\dd t) = \Indicator{U_t\le p_{t-}} N^\lambda(\dd t) + \Indicator{\tilde U_t> p_{t-}} \tilde N^\lambda(\dd t) \,, 
\end{equation}
where $p_t$ is a c\`adl\`ag control process adapted to $(\mathcal{F}_t)$, taking values in $[0,1]$. (More accurately, \cite{Connor2008} describe any co-adapted coupling of two unit-rate random walks on the hypercube, $\Z_2^n$, in terms of $(n+1)^2$ independent marked Poisson processes which are controlled by a doubly stochastic matrix-valued process; equation~\eqref{eqn:poisson_couple} is just a simplified parametrisation of their result when $n=1$.)

Combining these two results, it is clear that the joint process $(X_t,\hat X_t)$ satisfies
\begin{equation}\label{eq:joint}
\begin{pmatrix}
	\dd X_t\\
	\dd \hat X_t
\end{pmatrix} = \frac12\, 
\begin{pmatrix}
1 & 0 \\
\theta_t &\sqrt{1-\theta_t^2}
\end{pmatrix}
\begin{pmatrix}
	\dd B_t\\
\dd \tilde B_t
\end{pmatrix}
+ \pi \,
\begin{pmatrix}
1 & 0 \\
\Indicator{U_t\le p_{t-}} & \Indicator{\tilde U_t> p_{t-}}
\end{pmatrix}
\begin{pmatrix}
  N^\lambda (\dd t) \\
 \tilde N^\lambda (\dd t)
\end{pmatrix}\,,
\end{equation}
where the control process $c_t = (\theta_t,p_t)$ takes values in $[-1,1]\times [0,1]$ and is adapted to the filtration $(\mathcal{F}_t)$. The control process explicitly determines the dependence between $X$ and $\hat X$. In particular, if $\theta_t=0$ then their continuous components are independent; if $\theta_t = 1$ then they are \textit{synchronised}; and if $\theta_t = -1$ then they are \textit{mirror} or \textit{reflection} coupled. Similarly, if $p_t=1$ then the jump components of $X$ and $\hat X$ are synchronised, whereas if $p_t=0$ then they are independent.

In what follows, we will be interested in the difference process $Z_t = X_t-\hat X_t$, and in particular the time it takes for this to hit a multiple of $2\pi$ (at which time the projections of the two processes onto the unit circle will meet). Using~\eqref{eq:joint} we see that
\[
\dd Z_t = \frac12\bra{(1-\theta_t)\dd B_t -  \sqrt{1-\theta_t^2}\,\dd \tilde B_t	}
+ \pi \bra{ (1-\Indicator{U_t\le p_{t-}}) N^\lambda (\dd t)
	- \Indicator{\tilde U_t> p_{t-}} \tilde N^\lambda (\dd t)}\,.
\]
Thus $Z$ has the same dynamics as a L\'evy process on $\R$ whose continuous component has volatility $(1-\theta)/2$ and which makes jumps of size $+\pi$ and $-\pi$ each at rate $\lambda(1-p)$. For any $z\in\R$ the distances of $z+\pi$ and $z-\pi$ from the set $2\pi\Z$ are equal, and so the distribution of the time taken for $Z$ to hit the set $2\pi\Z$ is unchanged if we alter the dynamics to make all jumps of size $+\pi$. Furthermore, the independence of $N$ and $\tilde N$ means that  
\[  (1-\Indicator{U_t\le p_{t-}}) N^\lambda (\dd t)
+ \Indicator{\tilde U_t> p_{t-}} \tilde N^\lambda (\dd t) 
\,\,\eqdist \,\,\Indicator{U'_t> p_{t-}} N^{2\lambda}(\dd t) \,,
\]
where $\eqdist$ denotes equality in distribution, and where $N^{2\lambda}$ is a marked Poisson process of rate $2\lambda$ with marks $U'\sim\unifdist[0,1]$. Thus it suffices, for any given adapted control process $c_t = (\theta_t,p_t)$, to consider the hitting time on the set $2\pi\Z$ of the process given by 
\[
\dd Z_t = \sqrt{\frac{1-\theta_t}{2}}\,\dd B_t+\pi \Indicator{U'_t> p_{t-}} N^{2\lambda}(\dd t)\,; \qquad Z_0 = X_0-\hat X_0 \,.
\]

As in Section~\ref{sec:intro}, we shall work in the sequel with the process $D_t=|Z_t-2\pi\Z|\in[0,\pi]$, which measures the distance between the projections of $X$ and $\hat X$ on the circumference of the unit circle. Until the coupling time $T=\inf\{t\,:\,D_t=0\}$, $D$ behaves like a \textit{reflected} Brownian motion (with volatility controlled by $\theta$), and with a jump (controlled by $p$) at time $t$ having the effect of making $D_t = \pi -D_{t-}$. (That is, a jump of size $\pi$ in $Z$ results in $D$ being reflected about the point $\pi/2$.)

This allows us to view the search for an optimal co-adapted coupling as a stochastic control problem. For a given value function, we seek a control process $c_t=(\theta_t,p_t) \in [-1,1]\times[0,1]$ such that the time taken for the corresponding difference process $D$ to hit zero minimises/maximises the value function, as appropriate. 
As noted in~\cite{oksendal2007applied}, it suffices to restrict attention to Markov controls of the form $c_t = c(D_{t-})$.


Using this setup we can now give a precise definition of the Mirror and Synchronous couplings for two \JBM\ processes (cf Definition~\ref{def:couplings-informal}).
\begin{defn}\label{def:formal_RoW} 
	Suppose $D_{t-}=x\in[0,\pi]$. The Mirror coupling is the co-adapted coupling with control $c\mirror = (\theta\mirror,p\mirror)$ at time $t$ given by
\[
\text{Diffusion:} \quad\theta\mirror(x) = 2\Indicator{x=0} -1\,; \qquad
\text{Jumps:} \quad p\mirror(x) = \Indicator{x\le\pi/2} \,.
\]
Similarly, the Synchronous coupling uses control $c\sync = (\theta\sync,p\sync)$, where:
\[
\text{Diffusion:} \quad\theta\sync(x) = 2\Indicator{x=0\text{ or }x=\pi} -1\,;  \qquad
\text{Jumps:} \quad p\sync(x) = \Indicator{x\le\pi/2} \,.
\]
\end{defn}
As explained above, $D$ behaves like a Brownian motion on $(0,\pi)$, for which we can control the speed using $\theta$, and if $D$ sees a jump then this simply reflects it around the point $\pi/2$. Since we wish to minimise the time taken for $D$ to hit 0, it seems intuitively sensible to maximise the speed of the diffusion when $D_t\in(0,\pi)$, and to maximise/minimise the jump rate according to whether or not a jump would reduce the value of $D_t$; this is exactly what both Mirror and Synchronous couplings achieve. The only difference between the two couplings is in the choice of $\theta(\pi)$: $\theta\mirror(\pi) = -1$ forces the Brownian component to reflect downwards from the barrier at $\pi$, whereas $\theta\sync(\pi) = 1$ means that $D$ waits an $\expdist(2\lambda)$ amount of time at level $\pi$ before jumping directly to 0.

\subsection{LT-optimality}\label{ssec:LT-optimality}

In this section we shall prove the LT-optimality of the Synchronous coupling when $\lambda\ge\lambda^\star$, as claimed in Theorem~\ref{thm:main_thm_LT}. We begin by deriving explicit formulas for the Laplace transforms of our two co-adapted coupling strategies.

Given the jump rate $\lambda$ and a constant $\gamma\ge 0$, we shall write
\begin{equation*}\label{eq:alpha_beta_def}
	\alpha =\sqrt{2(2\lambda+\gamma)}\,, \quad\text{and}\quad  \beta=\sqrt{2\gamma}\,, 
\end{equation*}
and use these to define the following four non-negative constants (depending implicitly upon $\lambda$ and $\gamma$):
\begin{align*}
	\kappa_1 &= \frac{2 \cosh(\alpha\tfrac{\pi}{2})\sinh(\beta\tfrac{\pi}{2})}{2\cosh(\alpha\tfrac{\pi}{2})\cosh(\beta\tfrac{\pi}{2})-1}  \,; 
	&
	\kappa_2 &= \tfrac{\beta}{\alpha} \kappa_1 \sech(\alpha\tfrac{\pi}{2})\,; \nonumber \\
	\kappa_3 &=  \sech(\beta\tfrac{\pi}{2})\left( \tfrac{\beta}{2\alpha}\cosech(\alpha\tfrac{\pi}{2})+\sinh(\beta\tfrac{\pi}{2})\right)\,; 
	&
	\kappa_4 &=  \bra{\tfrac{\beta}{\alpha}}^2\cosech(\alpha\tfrac{\pi}{2}) \,. \label{eq:kappa_defns} 
\end{align*}

\newpage
\begin{lem} \label{lem:LT_for_RoW}
The Laplace transforms for the coupling times $\Tm$ and $\Ts$ under the Mirror and Synchronous couplings started from distance $x\in[0,\pi]$, are given by the following formulas.

\noindent Mirror coupling:
\[ 
\Ex[x]{e^{-\gamma \Tm}} = w\mirror(x) := \begin{cases}
\cosh(\beta x) - \kappa_1 \sinh(\beta x) & \quad 0\le x\le\tfrac{\pi}{2} \\
\Ex[\pi-x]{e^{-\gamma \Tm}}-\kappa_2
\sinh(\alpha(x-\tfrac{\pi}{2})) & \quad \tfrac{\pi}{2}<x\le\pi \,.
\end{cases}
\]
%
Synchronous coupling:
\[ 
	\Ex[x]{e^{-\gamma \Ts}} = w\sync(x) :=\begin{cases}
		\cosh(\beta x) - \kappa_3 \sinh(\beta x) & \quad 0\le x\le\tfrac{\pi}{2} \\
		\Ex[\pi-x]{e^{-\gamma \Ts}} - \kappa_4\sinh(\alpha(x-\tfrac{\pi}{2})) & \quad \tfrac{\pi}{2}<x\le\pi \,.
		\end{cases}
\]
\end{lem}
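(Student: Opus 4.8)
The plan is to compute the Laplace transforms by solving the boundary value problems obtained from applying the generator of the controlled process associated to each fixed Markov control. For a fixed control $c$, the difference process $D$ is a nice Markov process on $[0,\pi]$ killed on hitting $0$, and the function $w(x)=\Ex[x]{e^{-\gamma T}}$ satisfies, for $x\in(0,\pi)$ away from $\pi/2$,
\[
\tfrac{1-\theta(x)}{4}\,w''(x) + 2\lambda\,\Indicator{U>p(x)}\,\text{(jump term)} - \gamma\, w(x) = 0\,,
\]
more precisely the integro-differential equation $\mathcal{G}w = \gamma w$ where $\mathcal{G}$ is the generator: on the region where jumps are switched off ($p=1$) there is no jump term, and on the region where jumps occur at rate $2\lambda$ (here $p=0$, i.e. $x\in(\pi/2,\pi)$) we get $\mathcal{G}w(x) = \tfrac{1-\theta}{4}w''(x) + 2\lambda\bigl(w(\pi-x)-w(x)\bigr)$. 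First I would write down these ODEs on the two intervals $(0,\pi/2)$ and $(\pi/2,\pi)$ for each coupling, using that $\theta=-1$ throughout $(0,\pi)$ for the Mirror coupling (so volatility coefficient $\tfrac{1-(-1)}{4}=\tfrac12$, giving $\tfrac12 w'' = \gamma w$ on $(0,\pi/2)$ with roots $\pm\beta=\pm\sqrt{2\gamma}$), and for the Synchronous coupling $\theta=-1$ on $(0,\pi)$ as well so the interior ODEs are the same — the couplings differ only in the boundary behaviour at $\pi$.

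Next I would assemble the boundary and matching conditions. At $x=0$ we have $w(0)=1$ (the process has coupled). On $(\pi/2,\pi)$, substituting the reflection ansatz $w(x) = w(\pi-x) + g(x)$ where $g$ absorbs the part of the solution odd about $\pi/2$, the equation $\tfrac12 g'' - (2\lambda+\gamma)g = 0$ emerges (the $w(\pi-x)$ terms cancel against the jump term in a way that leaves the homogeneous equation with modified rate $2\lambda+\gamma$, whence the constant $\alpha = \sqrt{2(2\lambda+\gamma)}$), so $g(x)$ is a combination of $\sinh$ and $\cosh$ of $\alpha(x-\pi/2)$; the coefficient of the even part is forced to vanish because $g$ should be odd-type about $\pi/2$, leaving $g(x) = -\kappa_2\sinh(\alpha(x-\pi/2))$ (Mirror) or $-\kappa_4\sinh(\alpha(x-\pi/2))$ (Synchronous). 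The remaining conditions are: continuity of $w$ and $w'$ at $x=\pi/2$ (the process is a diffusion there, no singular behaviour), and the boundary condition at $x=\pi$ — for Mirror, $D$ reflects at $\pi$ so $w'(\pi^-)=0$ (Neumann); for Synchronous, $D$ sits at $\pi$ for an $\expdist(2\lambda)$ time then jumps to $0$, giving the algebraic condition $w(\pi) = \tfrac{2\lambda}{2\lambda+\gamma}\cdot 1 = \tfrac{2\lambda}{2\lambda+\gamma}$ (value $1$ at the jump destination, discounted over the exponential holding time). Imposing these four conditions pins down the four unknown coefficients, and solving the resulting linear system produces exactly the constants $\kappa_1,\dots,\kappa_4$ as written; I would verify by direct substitution that $\cosh(\beta x)-\kappa_1\sinh(\beta x)$ etc. satisfy everything.

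The main obstacle I anticipate is bookkeeping rather than conceptual: getting the jump term in the generator exactly right on $(\pi/2,\pi)$ — in particular checking that a jump there really sends $x\mapsto\pi-x\in(0,\pi/2)$ and that on $(0,\pi/2)$ jumps are \emph{synchronised} ($p=1$) so contribute nothing, confirming the equation is genuinely homogeneous on $(0,\pi/2)$; and then correctly deriving the boundary condition at $\pi$ for the Synchronous coupling, where one must argue that the sojourn at $\pi$ contributes the factor $\Ex{e^{-\gamma J}} = 2\lambda/(2\lambda+\gamma)$ with $J\sim\expdist(2\lambda)$, and that after the jump the process restarts from $0$ and is immediately coupled. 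There is also a mild subtlety in justifying that $w(x)=\Ex[x]{e^{-\gamma T}}$ is the \emph{unique} bounded solution of this boundary value problem, which follows from a standard Feynman–Kac / optional stopping verification argument (the discounting $\gamma\ge 0$ and boundedness of $w$ by $1$ make this routine, and $T<\infty$ a.s. for both couplings as noted after Definition~\ref{def:couplings-informal}). Once these points are handled, solving the $2\times 2$ (effectively) linear systems for the coefficients is elementary algebra, and simplification using hyperbolic identities yields the stated closed forms.
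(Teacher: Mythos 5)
Your proposal is correct and follows essentially the same route as the paper: the paper simply verifies that $e^{-\gamma t}w(D_t)$ is a martingale via It\^o's formula (the jump term becoming a compensated Poisson process is exactly your generator computation on $(\pi/2,\pi)$) and concludes by optional stopping with the boundary values $w(0)=1$, $w\sync(\pi)=2\lambda/(2\lambda+\gamma)$ and $w\mirror'(\pi)=0$. The only difference is presentational — you derive the constants by solving the boundary value problem before verifying, whereas the paper states the formulas and verifies directly — and your reflection ansatz $w(x)=w(\pi-x)+g(x)$ with $\tfrac12 g''=(2\lambda+\gamma)g$ is precisely the structure underlying the stated solution.
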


\begin{rem}\label{rem:x=pi}
	Note that when $x=\pi$, the final formula in Lemma~\ref{lem:LT_for_RoW} simplifies to
	\[ \Ex[\pi]{e^{-\gamma \Ts}} = 1-\bra{\frac{\beta}{\alpha}}^2= \frac{2\lambda}{2\lambda+\gamma}\,; \]
	this is just the Laplace transform of an $\expdist(2\lambda)$ distribution, which is consistent with the observation that under the Synchronous coupling the two processes will meet precisely when one of them jumps to the other side of the circle. Similarly, the formula for the Mirror coupling when $\lambda=0$ and $x=\pi$ reduces to 
	\[ \Ex[\pi]{e^{-\gamma \Tm}} = \sech(\pi\sqrt{2\gamma})\,, \]
	which is the Laplace transform of the time taken for a standard Brownian motion to hit $\pm \pi$ (and hence of the time taken for a Brownian motion started from, and reflected at, $\pi$ to hit 0).
\end{rem}

Before proceeding any further, it will be helpful to quickly establish some basic properties of the function $w\sync$ defined in Lemma~\ref{lem:LT_for_RoW}.

\begin{lem}
	\label{lem:convexity}
	For any fixed $\gamma\ge 0$,  $w\sync(x)$ is a convex function of $x\in[0,\pi]$, and $w\sync''(\pi)=0$. 
\end{lem}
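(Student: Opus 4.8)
The plan is to work directly with the explicit piecewise formula for $w\sync$ given in Lemma~\ref{lem:LT_for_RoW} (abbreviate $w:=w\sync$), differentiate it twice on each of the open intervals $(0,\tfrac{\pi}{2})$ and $(\tfrac{\pi}{2},\pi)$, show $w''\ge 0$ on both, and check that the junction at $x=\tfrac{\pi}{2}$ causes no difficulty. The identity $w''(\pi)=0$ will then drop out of the same computation.

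Since $w(x)=\Ex[x]{e^{-\gamma\Ts}}$ by Lemma~\ref{lem:LT_for_RoW}, we have $0\le w\le 1$ on $[0,\pi]$; in particular $w\ge 0$. On $(0,\tfrac{\pi}{2})$ the formula $w(x)=\cosh(\beta x)-\kappa_3\sinh(\beta x)$ gives at once $w''(x)=\beta^2 w(x)\ge 0$. On $(\tfrac{\pi}{2},\pi]$, differentiating the identity $w(x)=w(\pi-x)-\kappa_4\sinh(\alpha(x-\tfrac{\pi}{2}))$ twice — reading $w(\pi-x)$ off the first branch, which is legitimate since $\pi-x\in[0,\tfrac{\pi}{2})$ — and using $\alpha^2\kappa_4=\beta^2\cosech(\alpha\tfrac{\pi}{2})$, one finds
\[ w''(x)=\beta^2\bigl(w(\pi-x)-\cosech(\alpha\tfrac{\pi}{2})\sinh(\alpha(x-\tfrac{\pi}{2}))\bigr)=\beta^2 g(\pi-x), \]
where $g(y):=w(y)-\cosech(\alpha\tfrac{\pi}{2})\sinh(\alpha(\tfrac{\pi}{2}-y))$ for $y\in[0,\tfrac{\pi}{2}]$. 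Thus everything reduces to showing $g\ge 0$ on $[0,\tfrac{\pi}{2}]$ and computing $g(0)$.

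The substantive step is a maximum-principle argument for $g$. Differentiating the definition of $g$ on $(0,\tfrac{\pi}{2})$, substituting $w''=\beta^2 w$ there, and using $\alpha^2-\beta^2=4\lambda$, one sees that $g$ solves the linear ODE $g''-\alpha^2 g=-4\lambda w$ on $(0,\tfrac{\pi}{2})$, with right-hand side $\le 0$ because $w\ge 0$, and with boundary values $g(0)=w(0)-\cosech(\alpha\tfrac{\pi}{2})\sinh(\alpha\tfrac{\pi}{2})=1-1=0$ and $g(\tfrac{\pi}{2})=w(\tfrac{\pi}{2})\ge 0$. If $g$ attained a negative minimum at an interior point $y_0\in(0,\tfrac{\pi}{2})$ then $g''(y_0)\ge 0$, whereas the ODE forces $g''(y_0)=\alpha^2 g(y_0)-4\lambda w(y_0)<0$ — a contradiction. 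Hence $g\ge 0$ on $[0,\tfrac{\pi}{2}]$, so $w''\ge 0$ on $(\tfrac{\pi}{2},\pi)$ as well, and $w''(\pi)=\beta^2 g(0)=0$. (Equivalently, $w''(\pi)=0$ follows by substituting $w(\pi)=\tfrac{2\lambda}{2\lambda+\gamma}$ from Remark~\ref{rem:x=pi} and $w(0)=1$ into the identity $w''(x)=\alpha^2 w(x)-4\lambda w(\pi-x)$, valid on $(\tfrac{\pi}{2},\pi]$.)

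To upgrade ``$w''\ge 0$ on each piece'' to genuine convexity on all of $[0,\pi]$, I would finally check that $w$ is $C^1$ at $\tfrac{\pi}{2}$: a short computation with the definition of $\kappa_3$ shows $w'\bigl((\pi/2)^-\bigr)=w'\bigl((\pi/2)^+\bigr)=-\tfrac{\beta^2}{2\alpha}\cosech(\alpha\tfrac{\pi}{2})$ (and similarly $w''$ turns out to be continuous at $\tfrac{\pi}{2}$). Then $w'$ is non-decreasing on $[0,\pi]$, hence $w$ is convex. The only real obstacle is the branch $(\tfrac{\pi}{2},\pi)$, where $w''$ mixes $w(x)$ and $w(\pi-x)$ with opposite signs so that a head-on estimate is opaque; isolating the auxiliary function $g$ and recognising that it obeys a maximum-principle-amenable ODE with nonnegative boundary data is the crux, with everything else reducing to routine manipulation of hyperbolic functions.
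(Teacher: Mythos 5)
Your proof is correct, and for the substantive part---non-negativity of $w\sync''$ on $(\tfrac{\pi}{2},\pi)$---it takes a genuinely different route from the paper. Both arguments start from the same identity $w\sync''(x)=\beta^2 w\sync(\pi-x)-\alpha^2\kappa_4\sinh(\alpha(x-\tfrac{\pi}{2}))$ and both dispose of $[0,\tfrac{\pi}{2}]$ and of $w\sync''(\pi)=0$ by direct substitution. For $x>\tfrac{\pi}{2}$ the paper substitutes the explicit first-branch formula for $w\sync(\pi-x)$, expands everything in hyperbolic functions, and bounds the result below by $\tfrac12\sinh(\beta(\pi-x))\ge 0$ using the monotonicity of $u\mapsto u\sinh(u\tfrac{\pi}{2})$ and of $u\mapsto \sinh(u(x-\tfrac{\pi}{2}))/\sinh(u\tfrac{\pi}{2})$; you instead recast $w\sync''(x)$ as $\beta^2 g(\pi-x)$ for an auxiliary $g$ satisfying $g''-\alpha^2 g=-4\lambda w\sync\le 0$ with $g(0)=0$ and $g(\tfrac{\pi}{2})\ge 0$, and invoke the maximum principle. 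Your version is less computational and more robust, since it needs only $w\sync\ge 0$ and the structure of the ODE rather than bespoke hyperbolic inequalities; the paper's is entirely elementary and self-contained. Two minor remarks. First, you justify $w\sync\ge 0$ via the identification $w\sync(x)=\Ex[x]{e^{-\gamma\Ts}}$ from Lemma~\ref{lem:LT_for_RoW}; since the paper proves that lemma \emph{after} (and using the $w\sync''$ formula from) the present one, it is tidier either to observe that the martingale argument there does not use convexity, or to verify $w\sync\ge 0$ on $[0,\tfrac{\pi}{2}]$ directly from the formula (it reduces to $\beta\sinh(\beta\tfrac{\pi}{2})\le 2\alpha\sinh(\alpha\tfrac{\pi}{2})$). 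Second, your explicit check of $C^1$-matching at $x=\tfrac{\pi}{2}$ (common one-sided derivative $-\tfrac{\beta^2}{2\alpha}\cosech(\alpha\tfrac{\pi}{2})$) is correct and fills in a step the paper passes over when asserting that $w\sync$ is $C^2$ on $(0,\pi)$.
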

\begin{proof}
	It is clear from the definition of $w\sync$ that this function is continuous on $[0,\pi]$ and $C^2$ on $(0,\pi)$, with second derivative given by
	\[
	w\sync''(x) = \begin{cases}
		\beta^2 w\sync(x) & 0\le x\le \pi/2 \\
		\beta^2 w\sync(\pi-x) - \alpha^2 \kappa_4 \sinh(\alpha(x-\tfrac{\pi}{2})) & \pi/2 < x\le \pi\,.
	\end{cases}
	\]
	This is clearly non-negative for $x\le \pi/2$, and zero for $x=\pi$. For $x>\pi/2$, inserting the formula for $w\sync(\pi-x)$   and expanding shows that
	\begin{equation}\label{eq:second_deriv}
		w\sync''(x)  \propto s_\beta c_\beta\cosh(\beta(\pi-x)) - \left(\frac{\beta s_\beta}{2\alpha s_\alpha} + s_\beta^2 \right) \sinh(\beta(\pi-x)) - \frac{ s_\beta c_\beta}{s_\alpha} \sinh(\alpha(x-\tfrac{\pi}{2}))\,,
	\end{equation}
	where we have written $c_u = \cosh(u\tfrac{\pi}{2})$ and $s_u = \sinh(u\tfrac{\pi}{2})$. Now note that $u\mapsto us_u$ is an increasing function, and that $u\mapsto \sinh(u(x-\tfrac{\pi}{2}))/s_u$ is decreasing for any fixed $x\in[\pi/2,\pi]$. Thus (recalling that $\beta\le \alpha$) the right-hand side of \eqref{eq:second_deriv} can be bounded below by
	\[
	s_\beta c_\beta\cosh(\beta(\pi-x)) - (\tfrac12 + s_\beta^2) \sinh(\beta(\pi-x)) - c_\beta \sinh(\beta(x-\tfrac{\pi}{2})) =  \tfrac12 \sinh(\beta (\pi-x)) \ge 0\,.
	\]
\end{proof}

\begin{proof}[Proof of Lemma~\ref{lem:LT_for_RoW}]
	We shall deal first with the Synchronous coupling. It is clear that $w\sync(0) = 1$, and we have already observed that $w\sync(\pi) = 2\lambda/(2\lambda+\gamma)$, so $w\sync$ certainly takes the correct values at the two boundary points. Given $D_0 = x\in[0,\pi]$, consider the (bounded) process $W_t(x) = e^{-\gamma t} w\sync(D_t)$. We shall show that $W$ is a martingale until time $T\sync$, and then the optional stopping theorem will quickly yield that
	\[ \Ex[x]{e^{-\gamma \Ts}} = \Ex{W_{\Ts}(x)} = W_0(x) = w\sync(x) \,,\]
	as required.
	
	The martingale property is trivial when $x=\pi$. For $x\in(0,\pi)$, we apply It\^o's formula to $W_t(x)$ to obtain
	\begin{align}
		e^{\gamma t}\dd W_t(x)  &= \left(-\gamma w\sync(D_t) + \frac{1}{2}w\sync''(D_t)\right) \dd t + w\sync'(D_t)\dd D_t \nonumber \\
		&= \left(-\gamma w\sync(D_t) + \frac12 w\sync''(D_t)\right) \dd t +\dd Q_t +  \Indicator{D_t> \pi/2}\set{w\sync(\pi-D_t) - w\sync(D_t)}   N^{2\lambda}(\dd t)\,,
		\label{eq:Ito1}
	\end{align}
	where $Q_t$ is a martingale. Using the formula for $w\sync''$ established in the proof of Lemma~\ref{lem:convexity}, and recalling that $\alpha^2 = 2(2\lambda+\gamma)$ and $\beta^2 = 2\gamma$, equation~\eqref{eq:Ito1} simplifies to become
	\begin{align}
		e^{\gamma t}\dd W_t(x)  &= \dd Q_t + 
		\Indicator{D_t> \pi/2} \kappa_4 \sinh(\alpha(D_t-\tfrac{\pi}{2}))(N^{2\lambda}(\dd t)-2\lambda \dd t)\,.
		\label{eq:Ito2}
	\end{align}
	The final bracketed term on the right-hand side is of course a compensated Poisson process, and we conclude that $W$ is indeed a martingale.
	
	The Mirror coupling case is almost identical: repeating the argument above using $w\mirror$ in place of $w\sync$ shows that, when $x\in(0,\pi)$, equation~\eqref{eq:Ito2} holds with $\kappa_4$ replaced by $\kappa_2$. It only remains to check that the appropriate boundary conditions are satisfied for $w\mirror$: here we require $w\mirror(0)=1$ and $w\mirror'(\pi) = 0$, and these follow trivially from the definition of $w\mirror$.
\end{proof}

We are now in a position to complete the proof of Theorem~\ref{thm:main_thm_LT}. Let $\mathcal{C}$ denote the set of all successful co-adapted couplings. For any coupling $c\in\mathcal{C}$, and any fixed $\gamma\ge 0$, write
\[ w_c(x) = \Ex[x]{e^{-\gamma T_c}} \]
for the Laplace transform of the associated coupling time when starting from $D_0=x\in[0,\pi]$. The value function 
\[ \hat w(x) = \sup_{c\in \mathcal{C}} w_c(x) \]
	solves the HJB equation \cite[Chapter 3]{oksendal2007applied}, which may be derived here using It\^o's formula in a similar manner to equation~\eqref{eq:Ito1}. We first of all deal with the case when $x\in[0,\pi)$, for which the HJB equation is as follows:
\begin{equation}\label{eq:HJB}
	\sup_{\theta,\, p} \left( -\gamma \hat w(x) + \frac{(1-\theta(x))}{4} \hat w''(x) + 2\lambda(1-p(x))\set{\hat w(\pi-x)-\hat w(x)} \right)= 0 \,.
\end{equation}

Consider the Laplace transform for the Synchronous coupling, $w\sync(x)$. We saw in Lemma~\ref{lem:convexity} that this is convex on $[0,\pi]$; moreover, using the formula in Lemma~\ref{lem:LT_for_RoW} it may quickly be checked that  $w\sync(\pi-x)-w\sync(x)>0$ if and only if $x\in(\pi/2,\pi]$. Thus if we replace $\hat w(x)$ with $w\sync(x)$ in the left-hand side of \eqref{eq:HJB}, the corresponding supremum for $x\in[0,\pi)$ is obtained by taking $\theta(x) = -1$ and $p(x) =  \Indicator{x\le\pi/2}$; since these values agree with the control $(\theta\sync,p\sync)$ in Definition~\ref{def:formal_RoW}, it also follows that this supremum is indeed zero. Therefore the function $w\sync$ satisfies \eqref{eq:HJB} for $x\in[0,\pi)$; if we can also show that $w_c(\pi)\le w\sync(\pi)$ for all couplings $c\in\mathcal{C}$, then it will follow that $\hat w  = w\sync$, i.e. that the Synchronous coupling is LT-optimal (see Theorem 3.1 of \cite{oksendal2007applied}).

Starting from $D_0=\pi$, if $\theta_c(\pi)\neq 1$ then the diffusion component of $D$ reflects downwards off the barrier at $\pi$. This, along with the fact that $w\sync''(\pi) =0$ (Lemma~\ref{lem:convexity}), implies that the relevant quantity to be maximised over admissible controls $(\theta_c,p_c)$ becomes
\begin{equation}\label{eqn:pi_start}
	-\gamma w\sync(\pi) + 2\lambda (1-p_c(\pi)) 	\set{1- w\sync(\pi)}  - \sqrt{\frac{1-\theta_c(\pi)}{2}}w\sync'(\pi)\,.
\end{equation}
We therefore need to show that 
this expression is maximised by taking $p_c(\pi) = p\sync(\pi) = 0$ and $\theta_c(\pi) = \theta\sync(\pi) = 1$. The first of these is trivial, since $w\sync(\pi)\in(0,1)$ for all values of $\lambda$ and $\gamma$; the second will follow if and only if $w\sync'(\pi)\ge 0$ for all values of $\gamma\ge 0$. To make explicit the dependence of $w\sync'(\pi)$ on the underlying parameters $\lambda$ and $\gamma$, let us define
\begin{align*}
h(\lambda,\gamma)&:= 
w\sync'(\pi) = \beta\kappa_3 - \alpha\kappa_4\cosh(\alpha\tfrac{\pi}{2})\,.
\end{align*}
The function $h$ is continuous in both arguments, with $h(\lambda,0)=0$ and 
\[ \left. \frac{\dd h(\lambda,\gamma)}{\dd\gamma} \right\vert_{\gamma=0} = \pi + \frac{\cosech(\pi \sqrt{\lambda})}{2\sqrt{\lambda}} - \frac{\coth(\pi \sqrt{\lambda})}{\sqrt{\lambda}}\,.
\]
This derivative at $\gamma=0$ is a strictly increasing function of $\lambda$, equalling zero if and only if $\lambda=\lambda^\star$ (recall the defining equation for $\lambda^\star$, \eqref{eq:def_lambda*}). Furthermore, $h(\lambda,\gamma)$ is non-negative for all $\gamma$ if and only if $\lambda\ge\lambda^\star$; if $\lambda<\lambda^\star$ then there exist values of $\gamma>0$ for which $h$ is negative, and others for which it is positive: see Figure~\ref{fig:h_function}. It follows that $w\sync'(\pi)$ is non-negative for all $\gamma\ge 0$ exactly when $\lambda\ge \lambda^\star$, and we conclude that for this range of jump rates the Synchronous coupling is LT-optimal, as required.

\bigskip

\begin{figure}[ht]
	\centering
	\includegraphics[width=9cm]{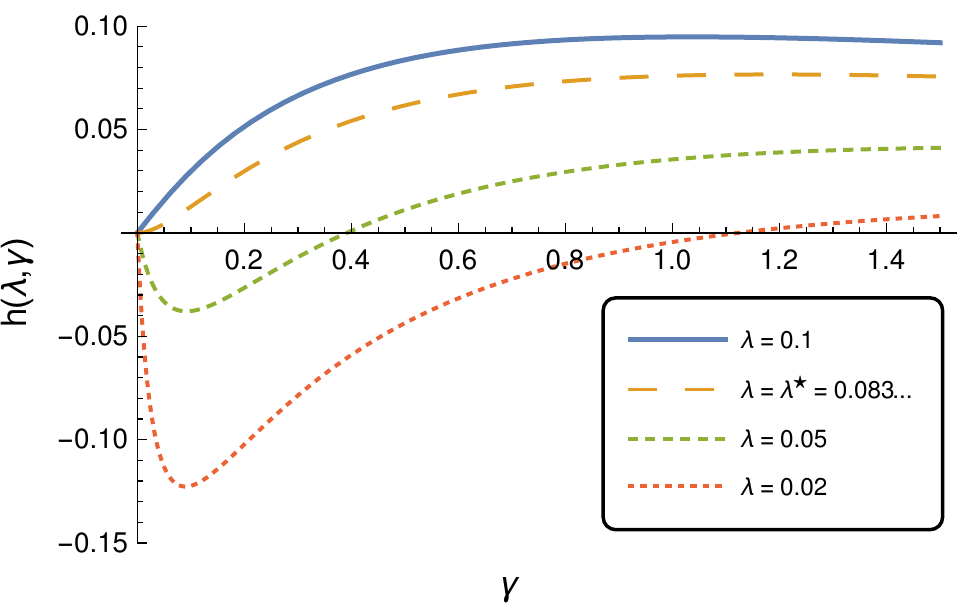}
	\caption{The function $h(\lambda,\gamma)$ for four representative values of $\lambda$. For $\lambda<\lambda^\star$ the function takes first negative then positive values as $\gamma$ increases; for $\lambda\ge \lambda^\star$ the function is non-negative for all $\gamma$.}
	\label{fig:h_function}
\end{figure}

\begin{rem}\label{rem:no_LT_optimal}
For $\lambda<\lambda^\star$, the proof given above fails only when $x=\pi$, when it is no longer true that $h(\lambda,\gamma)$ is non-negative for all $\gamma$. For such a $\lambda$, if $\gamma$ is such that $h(\lambda,\gamma)<0$, then the expression in \eqref{eqn:pi_start} is clearly maximised by setting $\theta_c(\pi) =\theta\mirror(\pi) = -1$, i.e. by using the Mirror coupling. However, any attempt to prove LT-optimality of the Mirror coupling when $\lambda<\lambda^\star$ is thwarted by the function $w''\mirror(\pi)$ taking both positive and negative values as $\gamma$ varies: as above, we arrive at the situation where for some values of $\gamma$ the optimal control is to set $\theta(\pi) =1$, and for others it is to set $\theta(\pi)=-1$. Figure~\ref{fig:nonLToptimal} shows that when $\lambda<\lambda^\star$, $w\mirror(\pi) \ge w\sync(\pi)$ when $\gamma$ is small, but that this relationship is reversed for larger $\gamma$, and so neither the Mirror nor Synchronous strategy is LT-optimal for these jump rates. This implies that when $\lambda<\lambda^\star$ there is no co-adapted coupling which maximises the Laplace transform $\Ex{e^{-\gamma T}}$ simultaneously for all values of $\gamma$. 

\begin{figure}[ht]
	\centering
	\includegraphics[width=0.48\textwidth]{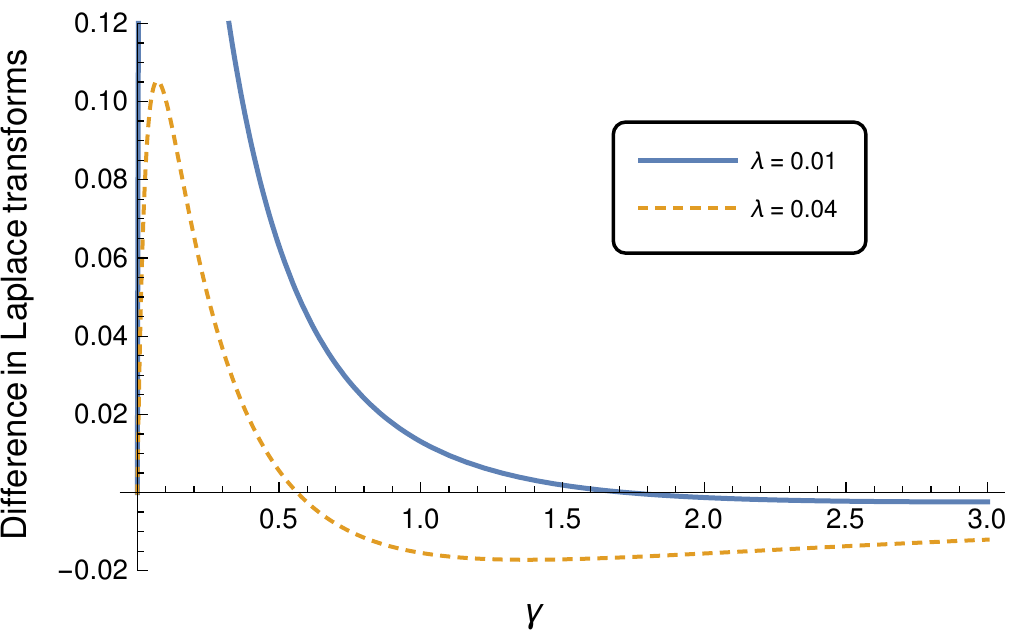}\hspace{5mm}
	\includegraphics[width=0.48\textwidth]{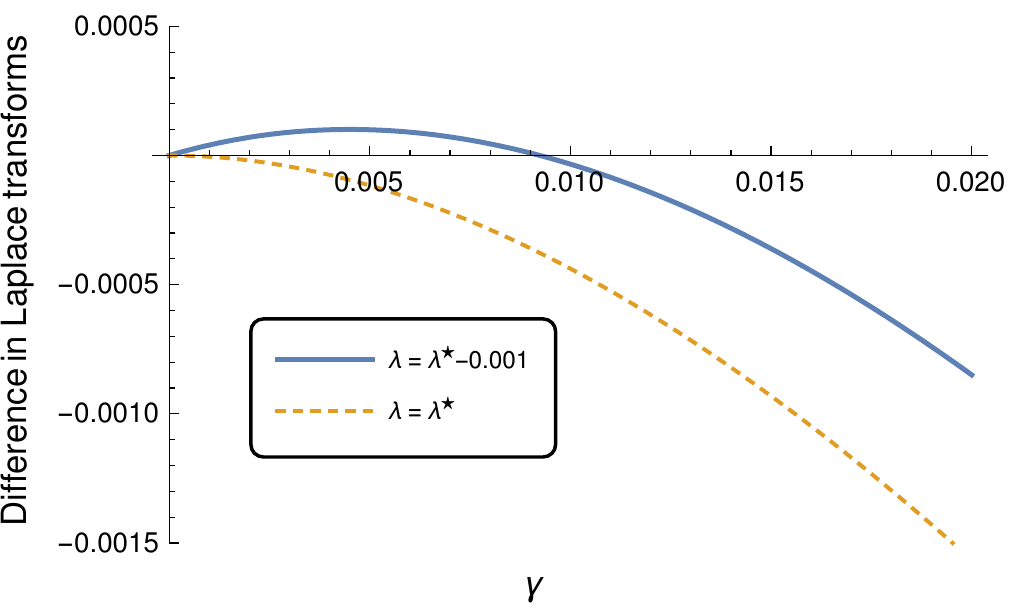}
	\caption{Difference between Laplace transforms corresponding to Mirror and Synchronous strategies: both graphs show $\mathbb{E}_{\pi}[e^{-\gamma\Tm}-e^{-\gamma\Ts}]$ 
		 as a function of $\gamma$, for different jump rates $\lambda\le\lambda^\star$. Neither coupling is LT-optimal when $\lambda<\lambda^\star$.} 
	\label{fig:nonLToptimal}
\end{figure}
\end{rem}

\subsection{Mean-optimality}\label{ssec:mean-optimality}

In this final section we compare the mean coupling times under the Mirror and Synchronous couplings. 

\begin{lem}\label{lem:expectation_RoW}
	The expectation of the coupling times under the Mirror and Synchronous couplings started from distance $x\in[0,\pi]$, are given by
	\begin{equation} 
		\Ex[x]{T}=
			\begin{cases}
				x(\pi-x)+C(\lambda)x  & \hbox{if } 0\le x\le\frac{\pi}{2} \\
				x(\pi-x)+C(\lambda)(\pi-x)+C(\lambda)\frac{\sinh(\sqrt{\lambda}(2x-\pi))}{\sqrt{\lambda}}& \hbox{if } 	\frac{\pi}{2}<x\le \pi\,,
			\end{cases} \label{eq:mean_time}
	\end{equation}
	where
	\begin{align*}\label{eq:Clambda}
	C(\lambda)= 
		\begin{cases}
	C\mirror(\lambda) = \frac{\pi}{2\cosh(\pi\sqrt{\lambda})-1} & \quad \text{if $T=\Tm$ (Mirror coupling)}\\
	C\sync(\lambda) = \frac{\cosech(\pi\sqrt{\lambda})}{2\sqrt{\lambda}} & \quad \text{if $T=\Ts$ (Synchronous coupling).} 
	\end{cases}
	\end{align*}

%
\end{lem}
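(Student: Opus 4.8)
The plan is to follow the same strategy as in the proof of Lemma~\ref{lem:LT_for_RoW}, replacing the exponentially discounted martingale $e^{-\gamma t}w(D_t)$ by an additive one. Write $g(x)$ for the right-hand side of \eqref{eq:mean_time}, with the constant taken to be $C\mirror(\lambda)$ or $C\sync(\lambda)$ according to the coupling under consideration. First I would record the elementary facts that $g(0)=0$, that $g$ is continuous on $[0,\pi]$, and that in fact $g\in C^2(0,\pi)$: both $g'$ and $g''$ match across the join at $x=\pi/2$, since $\sinh(\sqrt\lambda(2x-\pi))$ vanishes there. Differentiating, $g''(x)=-2$ on $(0,\pi/2)$ and $g''(x)=-2+4C\sqrt\lambda\sinh(\sqrt\lambda(2x-\pi))$ on $(\pi/2,\pi)$, while for $x\in(\pi/2,\pi)$ the reflection $\pi-x$ lies in $(0,\pi/2)$, so $g(\pi-x)-g(x)=-C\sinh(\sqrt\lambda(2x-\pi))/\sqrt\lambda$. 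Under the Mirror or Synchronous control one has $\theta\equiv-1$ on $(0,\pi)$ (hence unit diffusion speed for $D$), and jumps of $D$ — each of which reflects $D$ about $\pi/2$ — occur at rate $2\lambda$ exactly on the set $\{x>\pi/2\}$ where $p=0$; thus the generator of $D$, derived via It\^o as in \eqref{eq:Ito1}, is $\tfrac12 g''$ on $(0,\pi/2)$ and $\tfrac12 g''(x)+2\lambda(g(\pi-x)-g(x))$ on $(\pi/2,\pi)$. Substituting the above, the jump term cancels the $\sinh$-contribution to $\tfrac12 g''$ and one is left with $\tfrac12 g''(x)+2\lambda\,\mathbbm{1}_{\{x>\pi/2\}}(g(\pi-x)-g(x))\equiv -1$, for every value of the constant $C$.

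Given this, I would apply It\^o's formula to $M_t:=g(D_t)+(t\wedge T)$ exactly as in \eqref{eq:Ito1}: the $\dd t$-terms produced by the diffusion and by the compensator of the rate-$2\lambda$ jump process sum to $\bigl(\tfrac12 g''(D_t)+2\lambda\mathbbm{1}_{\{D_t>\pi/2\}}(g(\pi-D_t)-g(D_t))+1\bigr)\dd t=0$, so $M$ is a local martingale; since $g$ is bounded on $[0,\pi]$ and $\sup_{s\le t}|M_s|\le\norm{g}_\infty+T$, it is a genuine (uniformly integrable) martingale provided $\Ex[x]{T}<\infty$. Finiteness of $\Ex[x]{T}$ is itself a consequence of Lemma~\ref{lem:LT_for_RoW}: since $(1-e^{-\gamma T})/\gamma\uparrow T$ as $\gamma\downarrow 0$, monotone convergence gives $\Ex[x]{T}=\lim_{\gamma\downarrow 0}(1-w_c(x))/\gamma$, which the explicit formulas show to be finite (and equal to the claimed $g(x)$ — see the alternative route below). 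Optional stopping at $T$ then yields $g(x)=\Ex[x]{M_0}=\Ex[x]{M_T}=\Ex[x]{g(0)+T}=\Ex[x]{T}$, using $D_T=0$ and the fact that $D$ remains at $0$ thereafter.

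It remains to pin down $C$ from the behaviour at the upper endpoint $x=\pi$, which is the only place the two couplings differ. For the Mirror coupling $\pi$ is a reflecting barrier for $D$, so the correct boundary condition is $g'(\pi)=0$; from the formula $g'(\pi)=-\pi+C(2\cosh(\pi\sqrt\lambda)-1)$, and solving gives $C=C\mirror(\lambda)$. For the Synchronous coupling $D$ instead sits at $\pi$ for an $\expdist(2\lambda)$ holding time and then jumps straight to $0$, so $\Ex[\pi]{T}=1/(2\lambda)$; since $g(\pi)=C\sinh(\pi\sqrt\lambda)/\sqrt\lambda$ this forces $C=C\sync(\lambda)$. (Equivalently, for the Synchronous case $M_t$ must remain a martingale while $D_t=\pi$, which is precisely the requirement $2\lambda(g(0)-g(\pi))+1=0$.) As sanity checks I would note that, as $\lambda\to 0$, $C\mirror(\lambda)\to\pi$ and $g(\pi)\to\pi^2$, the expected time for reflected Brownian motion on $[0,\pi]$ to pass from $\pi$ to $0$; and $g(\pi)=1/(2\lambda)$ under the Synchronous coupling is consistent with Remark~\ref{rem:x=pi}.

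A second, and arguably quicker, route is to bypass the martingale entirely and differentiate the Laplace transforms of Lemma~\ref{lem:LT_for_RoW}: with $\beta=\sqrt{2\gamma}$ and $\alpha\to 2\sqrt\lambda$ as $\gamma\downarrow 0$, Taylor-expanding $w\mirror$ and $w\sync$ in $\gamma$ about $0$ (using $\kappa_1,\kappa_3=O(\sqrt\gamma)$ and $\kappa_2,\kappa_4=O(\gamma)$) gives $w(x)=1-\gamma\,\Ex[x]{T}+O(\gamma^2)$, and reading off the coefficient of $-\gamma$ separately on $[0,\pi/2]$ and on $(\pi/2,\pi]$ reproduces \eqref{eq:mean_time}. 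In either approach the only real work is bookkeeping: in the first, the integro-differential cancellation on $(\pi/2,\pi)$ and the (routine) justification of optional stopping; in the second, tracking the small-$\gamma$ asymptotics of the four constants $\kappa_i$.
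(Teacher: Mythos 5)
Your proposal is correct and follows exactly the two routes sketched in the paper's own (very brief) proof: the primary argument verifies that $V_t=v(D_t)+t$ is a martingale up to the hitting time of $0$ with the appropriate boundary conditions, and the alternative differentiates the Laplace transforms of Lemma~\ref{lem:LT_for_RoW} at $\gamma=0$. Your version simply fills in the details (the cancellation of the $\sinh$ terms in the generator, the boundary conditions pinning down $C\mirror$ and $C\sync$, and the optional-stopping justification) that the paper omits as ``straightforward''.
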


\begin{proof}
	This can be proved in a couple of ways. Probably the simplest, but most tedious, is to start from Lemma~\ref{lem:LT_for_RoW} and calculate $\Ex[x]{T} = \lim_{\gamma\to0}(-\tfrac{\dd}{\dd\gamma}\Ex[x]{e^{-\gamma T}})$. Alternatively, the line of argument used in Lemma~\ref{lem:LT_for_RoW} can be followed: letting 
	$v(x)$ be given by the function in \eqref{eq:mean_time}, it is straightforward to check that this has appropriate boundary conditions and that the process	$V_t(x) = v(D_t) + t$ is a martingale until the time that $D$ first hits 0. 
\end{proof}


Note that $C(\lambda) = \min\{C\mirror(\lambda),C\sync(\lambda)\}$, and that $C\mirror(\lambda)$ and $C\sync(\lambda)$ are both positive, decreasing functions of $\lambda$, which agree precisely when $\lambda = \lambda^\star$ (recall the definition of $\lambda^\star$ in \eqref{eq:def_lambda*}). Thus, from any starting state $x\in(0,\pi]$, $\Ex[x]{\Tm}<\Ex[x]{\Ts}$ if and only if $\lambda<\lambda^\star$.

The proof of Theorem~\ref{thm:main_thm_mean}---mean-optimality of the Mirror coupling for $\lambda<\lambda^\star$---is very similar to that given above for the LT-optimality of the Synchronous coupling. In this case it suffices to show that the function $v\mirror(x) = \Ex[x]{T\mirror}$ satisfies the HJB equation:
\[
	\inf_{\theta,\, p} \bra{1+\frac{(1-\theta(x))}{4}v\mirror''(x) + 2\lambda(1-p(x))\set{v\mirror(\pi-x)-v\mirror(x)}} = 0 \,;
\]
this is straightforward, and details are omitted for the sake of brevity.

Figure~\ref{fig:expectation-comparison} gives an impression of how the mean coupling time formulas from Lemma~\ref{lem:expectation_RoW} behave as functions of the jump rate and the starting distance $x$. The left-hand plot shows that $\Ex[x]{T\mirror}<\Ex[x]{T\sync}$ for all $x\in(0,\pi]$ when $\lambda=0.05<\lambda^\star$, with this inequality being reversed for $\lambda=0.1>\lambda^\star$. The right-hand plot graphs $\Ex[x]{T\mirror}$ and $\Ex[x]{T\sync}$ as functions of $\lambda$, when $x\in\{\pi/4,\pi/2\}$: for both starting distances the intersection of the two mean coupling times at $\lambda=\lambda^\star$ is evident.

\begin{figure}[ht]
	\centering
			\includegraphics[width=0.48\textwidth]{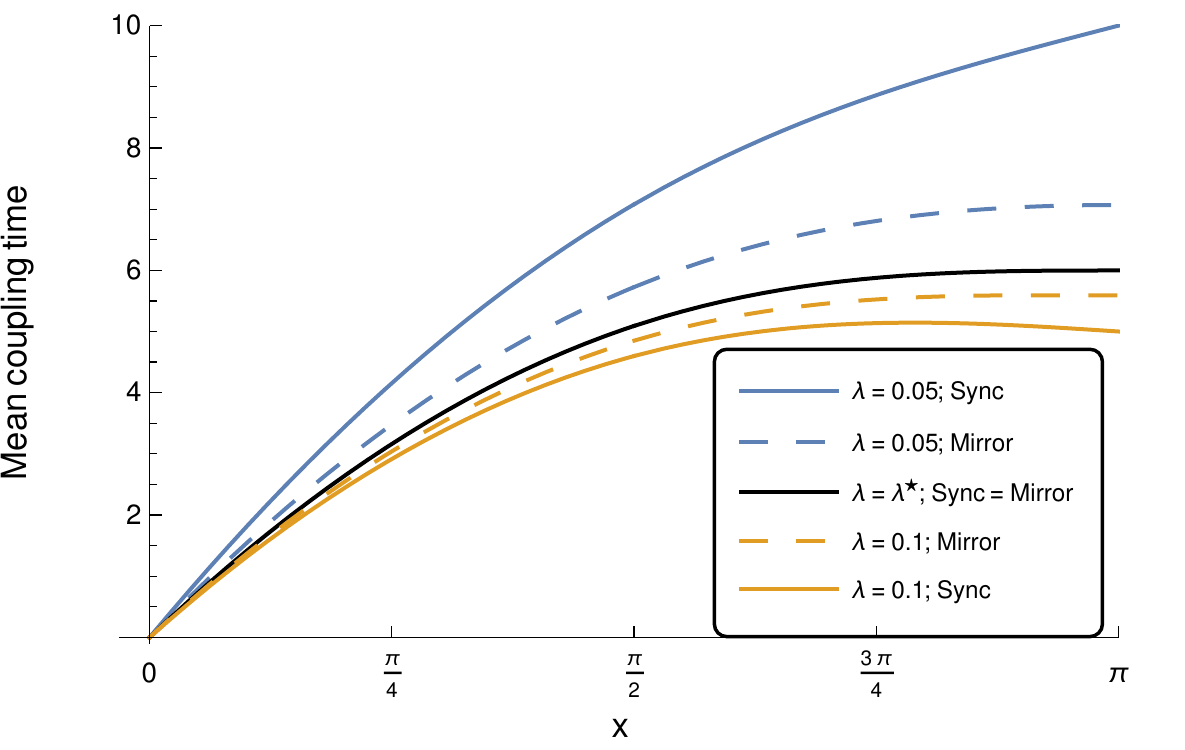}\hspace{5mm}
		\includegraphics[width=0.48\textwidth]{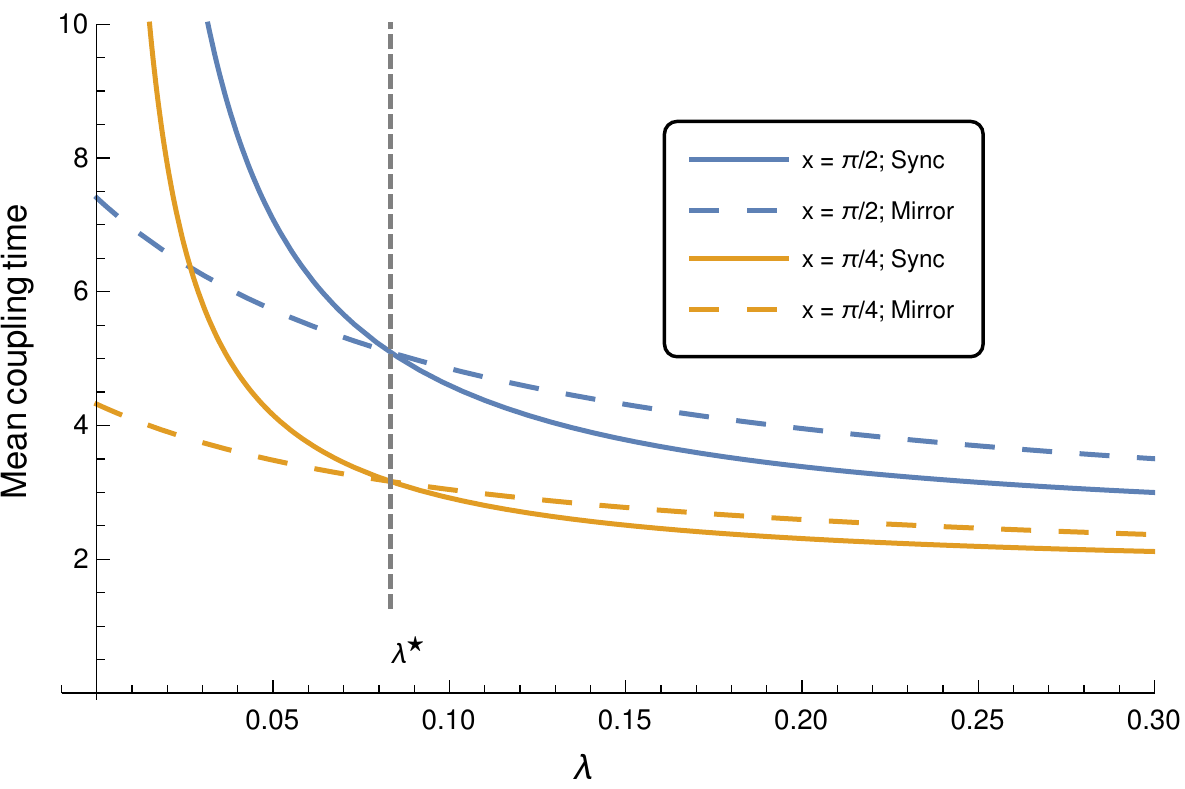}
	\caption{Expected coupling time under the Synchronous and Mirror couplings. The left-hand plot shows the mean coupling time as a function of $D_0=x$, for three representative values of $\lambda$. The right-hand plot shows the mean coupling time as a function of $\lambda$, for two particular values of $D_0$.}
\label{fig:expectation-comparison}
\end{figure}

Now let $\Tmin$ denote the coupling time of the mean-optimal co-adapted coupling (i.e. Mirror for $\lambda<\lambda^\star$, Synchronous otherwise). As expected, when $\lambda=0$ we have $\Ex[x]{\Tmin} = x(2\pi-x)$, and when $\lambda\to\infty$ we obtain $\Ex[x]{\Tmin} = x(\pi-x)$
(i.e. the mean time for a Brownian motion started at $x$ to hit $\{0,2\pi\}$ or $\{0,\pi\}$, respectively; recall the comment immediately following Theorem~\ref{thm:main_thm_mean}).
Figure~\ref{fig:expectation_RoW} plots $\Ex[x]{\Tmin}=\Ex[x]{T\mirror}\wedge\Ex[x]{T\sync}$ as a function of the initial distance $x\in[0,\pi]$, nicely showing the monotonic dependence upon the jump rate $\lambda$.
\begin{figure}[ht]
	\centering
	\includegraphics[height=5.5cm]{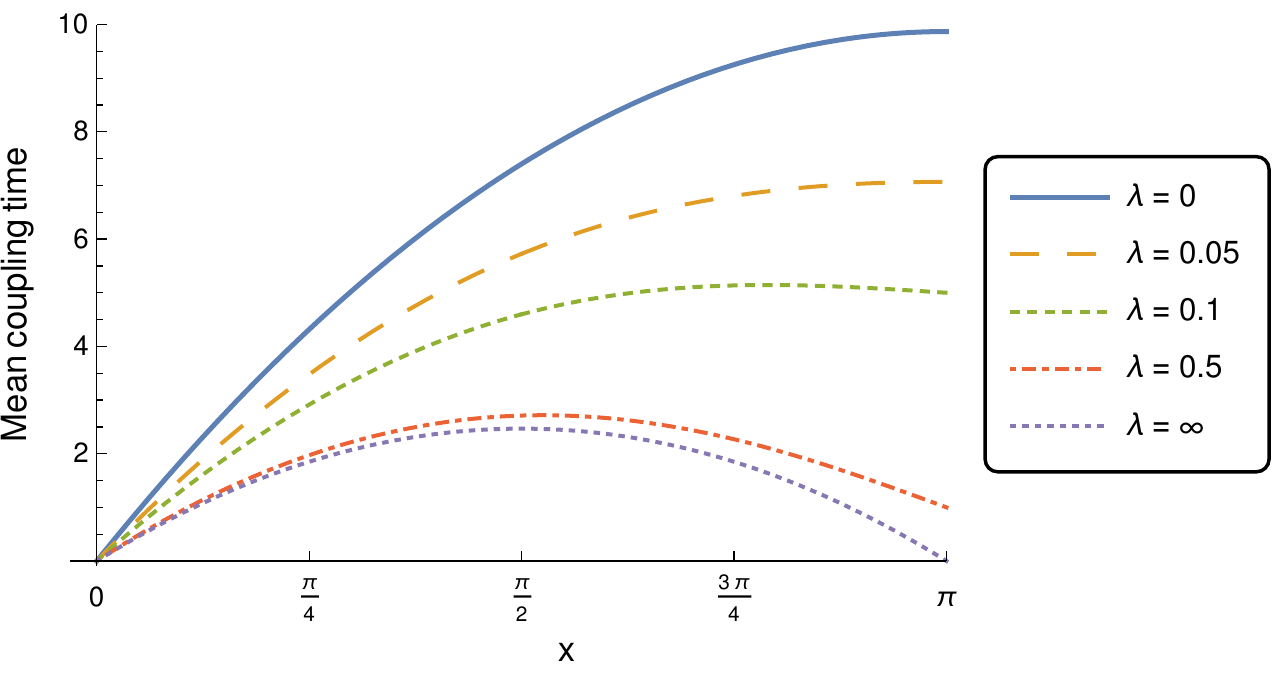}
	\caption{Expected coupling time from $D_0=x$ when using the mean-optimal strategy.}
	\label{fig:expectation_RoW}
\end{figure}

Finally, let us compare $\Tmin$ to a \emph{maximal} coupling. We know that the optimal co-adapted coupling is maximal when $\lambda=0$; given the construction of a maximal coupling in Section~\ref{sec:maximal} it should come as no surprise that this is not the case for any $\lambda>0$. From equation \eqref{eq:maxCouplingTail} we may easily calculate the Laplace transform and subsequently the mean of the maximal coupling time $T^\star$ when starting from opposite sides of the circle ($x=\pi$):
	\begin{equation}\label{eq:mean_maximal_pi}
		\Ex[\pi]{e^{-\gamma T^\star}} = \frac{1}{2\lambda+\gamma}\bra{2\lambda + \frac{\gamma}{\cosh(\pi\sqrt{2(2\lambda+\gamma)})}}  \,; \qquad \Ex[\pi]{T^\star} = \frac{1-\sech(2\pi\sqrt{\lambda})}{2\lambda} \,.
	\end{equation}
	On the other hand, Lemma~\ref{lem:expectation_RoW} tells us that $\Tmin$ satisfies
	\begin{equation}\label{eq:mean_RoW_pi}
		\Ex[\pi]{\Tmin} = \min\set{C\mirror(\lambda),C\sync(\lambda)}\frac{\sinh(\pi\sqrt{\lambda})}{\sqrt{\lambda}}\,.
	\end{equation}
Thus
\[ 
\frac{\Ex[\pi]{T^\star}}{\Ex[\pi]{\Tmin}} =
	\begin{cases}
		(\pi\sqrt\lambda)^{-1}(2\cosh(\pi\sqrt{\lambda})-1)\sinh(\pi\sqrt{\lambda})\sech(2\pi\sqrt{\lambda})
		& \quad 0\le\lambda< \lambda^\star\\
		1-\sech(2\pi\sqrt{\lambda}) & \quad\lambda^\star\le \lambda \,,
	\end{cases}
\]
and we note that for all values of $\lambda>0$ this ratio is strictly less than one. (For the case $\lambda<\lambda^\star$, this follows from the observation that
\[
	\frac{(2\cosh(z)-1)\sinh(z)\sech(2z)}{z} \,=\, \frac{\int_{z}^{2z}\cosh(y)\dd y}{z\cosh(2z)} \,,
\]
along with the convexity of $\cosh$.)
Graphs of the expressions in \eqref{eq:mean_maximal_pi} and \eqref{eq:mean_RoW_pi} are shown in Figure~\ref{fig:meanTimeComparison}: we see that the difference between the mean coupling times is largest when $\lambda = \lambda^\star$.
\begin{figure}[h]
	\centering
	\includegraphics[width=10cm]{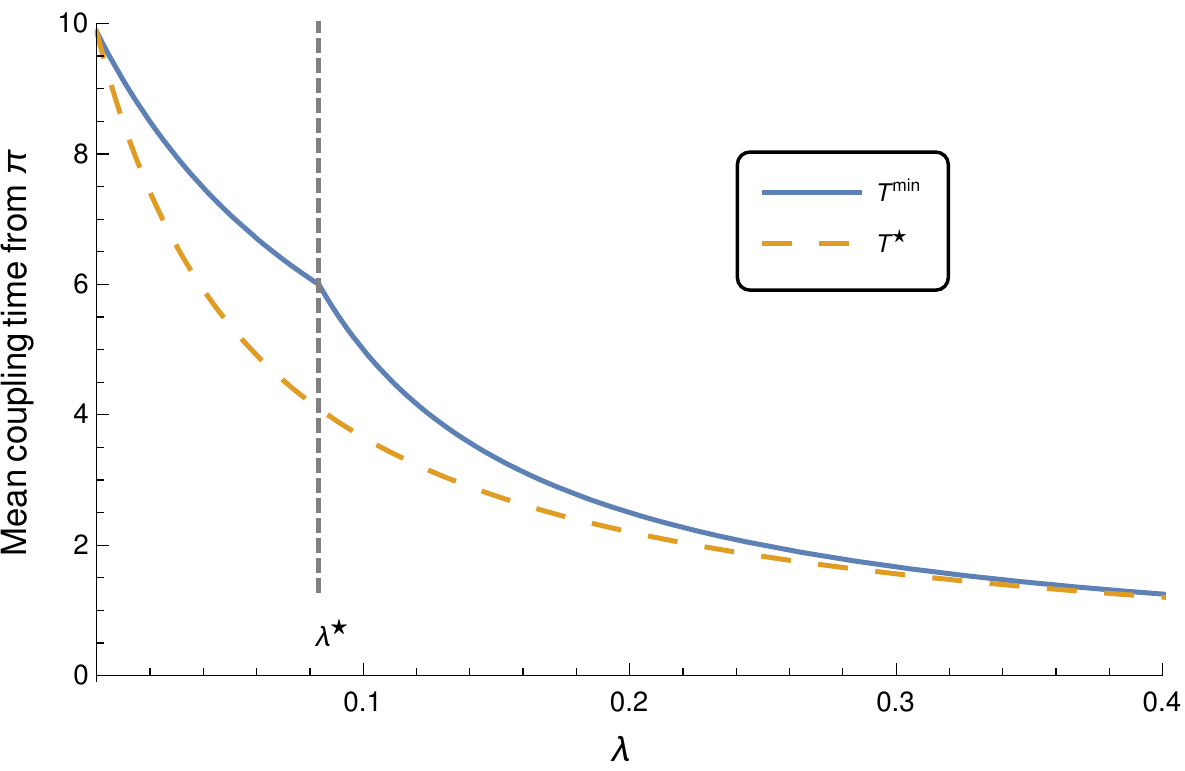}
	\centering
	\caption{Expectation of the mean-optimal co-adapted coupling time ($\Tmin$) and maximal coupling time ($T^\star$) when $D_0 = \pi$.}
	\label{fig:meanTimeComparison}
\end{figure}



\bibliographystyle{chicago} 
\bibliography{jbm}

\bigskip
\noindent
Department of Mathematics, University of York, York, YO10 5DD, UK. \\
Email: \url{stephen.connor@york.ac.uk} 

\end{document}